\title{On odd-dimensional Modular Tensor Categories}
\numberwithin{equation}{section}\theoremstyle{plain}
 \newtheorem*{rep@theorem}{\rep@title}
\newcommand{\newreptheorem}[2]{%
\newenvironment{rep#1}[1]{%
 \def\rep@title{#2 \ref{##1}}%
 \begin{rep@theorem}}%
 {\end{rep@theorem}}}
\newtheorem{theorem}{Theorem}[section]
\newtheorem{lemma}[theorem]{Lemma}
\newtheorem{cor}[theorem]{Corollary}
\newtheorem{conjecture}[theorem]{Conjecture}
\newtheorem{pro}[theorem]{Proposition}
\newtheorem{remark}[theorem]{Remark}
\newcommand\pf{\begin{proof}}
\newcommand\epf{\end{proof}}
\newcommand\Hom{\operatorname{Hom}}
\newcommand\End{\operatorname{End}}
\newcommand\ev{\operatorname{ev}}
\newcommand\coev{\operatorname{coev}}
\newcommand\Rep{\operatorname{Rep}}
\newcommand\FPdim{\operatorname{FPdim}}
\newcommand\cd{\operatorname{c.d.}}
\newcommand\Tr{\operatorname{Tr}}
\newcommand\Id{\operatorname{Id}}	
	\newcommand\VecGw{\operatorname{\textbf{Vec}_G^{\omega}}}
\newcommand\id{\operatorname{id}}
\newcommand\modulo{\operatorname{mod}}
\newcommand\di{\operatorname{d}}
\newcommand\rank{\operatorname{rank}}
\begin{document}

\author[A. Czenky]{Agustina Czenky}
\address{Department of Mathematics, University of Oregon}
\email{aczenky@uoregon.edu}

\author[J. Plavnik]{Julia Plavnik}
\address{Department of Mathematics, Indiana University}
\email{jplavnik@iu.edu}

\begin{abstract}
    We study odd-dimensional modular tensor categories and maximally non-self dual (MNSD) modular tensor categories of low rank. We give lower bounds for the ranks of modular tensor categories in terms of the rank of the adjoint subcategory and the order of the group of invertible objects. As an application of these results, we prove that odd-dimensional modular tensor categories of ranks 13 and 15 are pointed. In addition, we show that odd-dimensional tensor categories of ranks 19, 21 and 23 are either pointed or perfect. 
\end{abstract}

\maketitle
\section{Introduction}

Modular tensor categories (MTCs) with integral Frobenius-Perron dimension have been deeply studied in the last decade, see for example \cite{BR}, \cite{BGHKNNPR}, \cite{DGNO1}, \cite{DLD}, \cite{DN}, \cite{DT}, \cite{EGO}, \cite{ENO1}, \cite{ENO2}, \cite{NR}. One large class of examples is given by odd-dimensional MTCs ~\cite{GN}. Ng and Schauenburg proved that if the dimension of a MTC is odd then the category is maximally non-self dual (MNSD), i.e.  the only self-dual simple object is the unit object \cite[Corollary 8.2]{NS}.  In the other direction, Hong and Rowell 
showed in \cite[Theorem 2.2]{HR} that MNSD MTCs are always integral, and as a consequence they must be odd-dimensional. In this work we will focus our attention on low rank MNSD MTCs, or equivalently, low rank odd-dimensional MTCs.

In~\cite{BR}, the authors studied odd-dimensional MTCs in more detail. In particular, they asked if odd-dimensional MTCs are neccesarily group-theoretical. A negative answer to this question can be deduced from results of Larson and Jordan in~\cite{JL}. They constructed explicit examples of non group-theoretical integral fusion categories of dimension $pq^2$ for the case where $p$ is an odd prime that divides $q+1$ \cite[Theorem 1.1]{JL}. Hence there are non-group-theoretical integral modular categories obtained as the Drinfeld centers of said categories.

Bruillard and Rowell showed in~\cite{BR} that any MNSD MTC of rank at most 11 is always pointed. Recall that a fusion category is called pointed if all its simple objects are invertible, or, equivalently, all its simple objects have Frobenius-Perron dimension equal to 1. They also found an example of a MNSD MTC of rank $25$ that is not pointed (but it is group-theoretical). A natural follow-up question is if there exists a non-pointed MNSD MTC of rank less than $25$ \cite{BR}.

In this manuscript we continue the study of odd-dimensional MTCs. We prove some useful relations between the ranks of the components of a faithful grading of an odd-dimensional MTC. Moreover, we give some bounds of the rank of the category in terms of the order of the group of invertibles, the rank of the adjoint subcategory, and the primes dividing the order of the group of invertibles of the adjoint. We apply these general results to classify odd-dimensional MTCs of ranks $13$ and $15$. More precisely, we show that all simple objects are invertible, thus the category can be understood in group-theoretical terms. We prove 
that if a odd-dimensional MTC of rank between $19$ and $23$  is not pointed, then it must be \emph{perfect}, which means that the unit is the unique invertible object. This is summarized in the following theorem.

\begin{reptheorem}{thm:MNSD}
Let $\mathcal C$ be an odd-dimensional MTC.
\begin{enumerate}[leftmargin=*, label=(\alph*)]
\item If $\rank(\mathcal C)=13$ or $15$, then $\mathcal C$ is pointed.
\item\label{item:MNSD-rank<25} If $\rank(\mathcal C)=17$, then $\mathcal C$ is either pointed, perfect, or has 3 invertible objects. Moreover, if the latter case exists, then $\mathcal C$ is equivalent to a categorification of the ring $R_{3,H}$  as defined in \cite[Definition 1.3]{JL}, where $H$ is a finite abelian group of order 25.
\item If $\rank(\mathcal C)=19, 21$ or $23$, then $\mathcal C$ is either pointed or perfect.
\end{enumerate}
\end{reptheorem} 

The potential rank 17 non-pointed, non-perfect modular category was missing in a previous version of this article. Palcoux found this potential example and the possible dimensions were described in \cite[Theorem 1.9]{ABPP}.


Hence, for an odd-dimensional MTC $\mathcal C$ of rank between $19$ and $23$, the fusion subcategory $\mathcal C_{pt}$ generated by invertible objects is confined to the two ends of the spectrum: either $\mathcal C_{pt}=\operatorname{Vec}$ or $\mathcal C_{pt}=\mathcal C$.

There is a major conjecture in fusion categories that states that every weakly integral fusion category is weakly group-theoretical~\cite[Question 2]{ENO2}.
As a consequence of our results, a non-pointed odd-dimensional MTC of rank between 19 and 23 cannot be weakly group-theoretical. This follows since for weakly group-theoretical fusion categories there is a version of the Feit-Thompson theorem: if a weakly group-theoretical fusion category is odd-dimensional then it is solvable~\cite[Proposition 7.1]{NP}. It is known that solvable fusion categories contain non-trivial invertible objects~\cite[Proposition 4.5]{ENO2}.  The veracity of this conjecture would imply that odd-dimensional MTCs of ranks between 19 and 23 are pointed.  In fact, any perfect odd-dimensional MTC would yield a counter-example for said conjecture. This leads us to the following conjecture: 
\begin{conjecture}\label{conjecture: odd dim non perfect}
Odd-dimensional MTCs have at least one non-trivial invertible object, i.e. they cannot be perfect.
\end{conjecture}

By the argument explained above using Feit-Thompson for weakly group-theoretical fusion categories, this conjecture would be an immediate consequence of the weakly integral conjecture.  


Another question is if it is possible to remove the hypothesis of weakly group-theoretical in the fusion categorical version of Feit-Thompson's Theorem:

\begin{conjecture}\label{Conjecture:odd modular are solvable}
Odd-dimensional MTCs are solvable.
\end{conjecture}
It follows from \cite[Proposition 4.5]{ENO2} that Conjecture \ref{Conjecture:odd modular are solvable} is equivalent to the stronger statement that odd-dimensional fusion categories are solvable. In fact, if $\mathcal C$ is an odd-dimensional fusion category, then its Drinfeld center $\mathcal {Z(C)}$ is an odd-dimensional MTC. Assuming Conjecture \ref{Conjecture:odd modular are solvable} this would imply that $\mathcal{Z(C)}$ is solvable, and since it is Morita equivalent to $\mathcal C \boxtimes \mathcal C^{\operatorname{op}}$ we conclude that $\mathcal C$ is also solvable \cite[Proposition 4.5]{ENO2}.  

Note that the above conjectures are equivalent as follows\footnote{The equivalence of these statements was pointed out to us by C. Galindo.}. Assuming Conjecture \ref{conjecture: odd dim non perfect}, we prove Conjecture \ref{Conjecture:odd modular are solvable} by induction on $\dim(\mathcal C).$ Let $g$ be a non-trivial invertible object in $\mathcal C$; we may assume that the order of $g$ is prime. 
Consider the fusion subcategory $\mathcal{C}\langle g\rangle$. If $\mathcal{C}\langle g\rangle$ is not Tannakian then it must be modular and $\mathcal C= \mathcal C \boxtimes \mathcal{C}\langle g\rangle'$. Since $\mathcal{C}\langle g\rangle'$ is a MTC of dimension strictly less than $\dim(\mathcal C)$ then by induction it is solvable and thus $\mathcal C$ is solvable \cite[Proposition 4.5]{ENO2}.  On the other hand, if $\mathcal{C}\langle g\rangle$ is Tannakian then $\mathcal{C}\langle g\rangle$ is equivalent to $\Rep(G)$ for $G=\langle g\rangle$, and so the trivial component $(\mathcal C_G)_0$ of the de-equivariantization $\mathcal C_G$ of $\mathcal C$ is a MTC of dimension strictly less than $\dim(\mathcal C)$ and hence solvable by induction. It follows that $\mathcal C$ is also solvable by \cite[Proposition 4.5]{ENO2}. On the other hand, Conjecture \ref{conjecture: odd dim non perfect} follows from Conjecture \ref{Conjecture:odd modular are solvable} and \cite[Proposition 4.5]{ENO2}. 

By an analogous reasoning, we have that Conjecture $\ref{conjecture: odd dim non perfect}$ is equivalent to \cite[Question 2]{ENO2} for the odd-dimensional case, namely, odd-dimensional weakly integral fusion categories are weakly group theoretical. 

\medbreak 
The paper is organized as follows. In Section~\ref{section: preliminaries} we introduce the basic notions and we prove some results that we will use throughout this article. In Section~\ref{section: cd_p} we study MTCs whose simple objects have dimension a multiple of an odd prime number $p$ and MTCs with dimension a power of $p$. It is known that MTCs of dimension $p^n$ for $n\leq 4$ are pointed, see for example \cite{DN} for the result when $n=4$. This is not longer true for $n=5$, 
we show that they must be either pointed or a $\mathbb Z_p\times \mathbb Z_p$-gauging of a pointed modular category of dimension $p$. In Section~\ref{section: perfect} we show that odd-dimensional MTCs with exactly one invertible object (the unit) have no non-trivial symmetric subcategories. This fact has strong consequences: every fusion subcategory of such a category is modular, and therefore the category is split. In Section~\ref{section: ranks} we find bounds for the rank of a category in terms of data associated to its universal grading. We also give conditions on the rank of the graded components in relation to the fixed points by the action of the group of invertible objects of the category. Lastly, in Section~\ref{section: mnsd low rank}
 we apply the results on the previous sections to classify odd-dimensional MTCs of rank between $13$ and $23$.


\section*{Acknowledgments}
The authors specially thank A. Brugui\`eres for enlightening discussions. They also thank P. Bruillard, E. Campagnolo, H. Pe\~na Polastri, G. Sanmarco and L. Villagra for useful comments. The authors thank C. Galindo, M. , M\"uller, V. Ostrik, E. Rowell, and A. Schopieray for helpful remarks on an early draft.

Substantial portions of this project were discussed at the CIMPA Research Schools on Quantum Symmetries (Bogot\'a, 2019) and Hopf Algebras and Tensor Categories (C\'ordoba, 2019) and the authors would like to thank all the various organizers and granting agencies involved.
Both authors were partially supported by NSF grant DMS-0932078,
administered by the Mathematical Sciences Research Institute while the first author attended a workshop and the second author was in residence
at MSRI during the Quantum Symmetries program in Spring 2020.
JP was partially supported by NSF grants DMS-1802503 and DMS-1917319. JP gratefully acknowledges the support of Indiana University, Bloomington, through a Provost's Travel Award for Women in Science.
\section{Preliminaries}\label{section: preliminaries}
In this paper, we will always work over an algebraically closed field $\textbf{k}$ of characteristic zero. 
We refer to~\cite{BK} and~\cite{EGNO}
for the basic theory of fusion categories and braided fusion categories, and for terminology used throughout this paper. 

\subsection{Fusion categories}
A fusion category $\mathcal C$ over $\textbf{k}$ is a $\textbf{k}$-linear
semisimple rigid tensor category with a finite number of simple objects
and finite dimensional spaces of morphisms, and such that the endomorphism algebra
of the identity object (with respect to the tensor product) $\textbf{1}$ is $\textbf{k}$. 

Let $\mathcal C$ be a fusion category. We shall denote by $\mathcal{O(C)}$ the set of isomorphism classes of simple objects of $\mathcal C$. After fixing an enumeration of $\mathcal{O(C)}$, we denote the fusion coefficients by $N^k_{ij} := \dim \Hom(X_i \otimes X_j
, X_k)$, where $X_i, X_i, X_k \in \mathcal{O(C)}$.
The Frobenius-Perron dimension of an object $X$ is denoted by $\FPdim{X}$ (see~\cite{ENO1}). We will use the notation $\cd(\mathcal C)$ to indicate the set  $$\cd(\mathcal C)=\{\FPdim(X) : X \in \mathcal{O(C)}\},$$ and the positive real numbers $\FPdim X$, $X \in \mathcal{O(C)}$, will be called the \emph{irreducible degrees} of  $\mathcal C$. The Frobenius-Perron dimension of $\mathcal C$ will be denoted by $\FPdim(\mathcal C)$. We say that a $\mathcal C$ is \emph{weakly integral} if $\FPdim(\mathcal C)$ is an integer, and \emph{integral} if $\FPdim(X)$ is an integer for all simple $X$. 

Given an object $X$ and an isomorphism $f:X\to X^{**}$, we can define a (left) \emph{trace} given by the composition
	\begin{equation}
	\begin{tikzcd}
	\Tr_X(f):\textbf{1} \arrow{r}{\coev_X} &X\otimes X^* \arrow{r}{f\otimes \id_{X^*}} 
	&X^{**}\otimes X^* \arrow{r}{\ev_{X^*}} &\textbf{1}.
	\end{tikzcd}
	\end{equation}

A \emph{pivotal structure} on $\mathcal C$ is a natural isomorphism $\psi: \text{Id} \to (-)^{**}$, i.e., an isomorphism between the double dual and identity functors. Associated to a pivotal structure $\psi$ we get the notion of \emph{quantum dimension} $\di_X$ of an object $X\in \mathcal C$ as the scalar given by 
	\begin{equation}
	\begin{tikzcd}
d_X:=\Tr_X(\psi_X)\in \End(1)\simeq \textbf k.
	\end{tikzcd}
	\end{equation}
	The \emph{quantum dimension} of $\mathcal C$ is given by $\dim(\mathcal C)=\sum\limits_{X\in \mathcal{O(C)}} \di_X \di_{X^*}.$  A pivotal structure on a category $\mathcal C$ is called \emph{spherical} if it satisfies the property $\di_X=\di_{X^*}$ for all simple object $X\in \mathcal C$. Note that in this case, $\dim(\mathcal C)= \sum\limits_{X\in \mathcal{O(C)}} \di_X^2$.
	
A pivotal fusion category $\mathcal{C}$ is said to be \emph{pseudo-unitary} if $\dim(\mathcal{C})=\FPdim(\mathcal{C})$.
This happens to be equivalent to $\di_X^2=\FPdim(X)^2$ for all $X \in \mathcal{O(C)}$. 
Any pseudo-unitary fusion category admits a unique spherical structure, with respect to which the categorical dimensions of all simple objects are positive, and coincide with their Frobenius-Perron dimensions \cite[Proposition 8.23]{ENO1}.
By \cite[Proposition 9.6.5]{EGNO}, weakly integral fusion categories are pseudo-unitary. We will sometimes use the Frobenius-Perron dimension and the global dimension of $\mathcal C$ indifferently when working with pseudo-unitary categories \cite[Corollary 9.6.6]{EGNO}.

A fusion category
is \emph{pointed} if all simple objects are invertible, which is equivalent to having $\cd(\mathcal C)=\{1\}$. 
In this case $\mathcal C$ is equivalent to the category of finite dimensional $G$-graded vector
spaces $\VecGw$, where $G$ is a finite group and $\omega$ is a 3-cocycle on $G$ with coefficients in $\textbf{k}^{\times}$ codifying the associativity structure. 
The group of isomorphism classes of invertible objects of $\mathcal C$ will be indicated by $\mathcal{G(C)}$. 
The largest pointed subcategory of $\mathcal C$ will be denoted by $ \mathcal{C}_\mathrm{pt}$, that is the fusion subcategory of $\mathcal{C}$ generated by $\mathcal{G(C)}$. We will often identify the objects in $\mathcal{G}(\mathcal{C})$ with the invertible objects in $\mathcal{C}_\mathrm{pt}$. A fusion category is \emph{group-theoretical} if it is Morita equivalent to a pointed fusion category.

\subsubsection{The universal grading}

Let $G$ be a finite group. A $G$-grading on a fusion category $\mathcal C$ is a decomposition $\mathcal C = \displaystyle\bigoplus_{g\in G} \mathcal C_g$, such that $\otimes: \mathcal C_g\times \mathcal C_h \to \mathcal C_{gh}$, $\mathbf{1}\in \mathcal C_{e}$, and $*:\mathcal C_g \to \mathcal{C}_{g^{-1}}$. Such grading is said to be \emph{faithful} if $\mathcal C_g\ne 0$ for all $g \in G$. In this case, we say that $\mathcal C$ is a $G$-extension of the trivial component $\mathcal C_e$.

If $\mathcal C$ is a fusion category endowed with a faithful grading $\mathcal C = \displaystyle\bigoplus_{g\in G} \mathcal C_g$, then all the components $\mathcal C_g$ have the same Frobenius-Perron dimension \cite[Proposition 8.20]{ENO2}. Hence, $\FPdim(\mathcal C)=|G|\FPdim(\mathcal C_e)$. By \cite{GN}, any fusion category $\mathcal{C}$ admits a canonical faithful grading $\mathcal{C}= \oplus_{g \in U(\mathcal{C})} \mathcal{C}_g$,  called the \emph{universal grading}; its trivial component coincides with the so-called \emph{adjoint subcategory} $\mathcal{C}_{ad}$, which is defined as the fusion subcategory generated by $\{ X\otimes X^* \colon X \in \mathcal{O(C)} \}$. If $\mathcal{C}$ is equipped with a braiding, then $\mathcal{U(C)}$ is abelian. Moreover, if $\mathcal C$ is modular then $\mathcal{U(C)}$  is isomorphic to the group of (isomorphism classes of) invertibles $\mathcal{G(C)}$ \cite[Theorem 6.3]{GN}. 

\subsubsection{Nilpotent, solvable and weakly group-theoretical fusion categories}

Let $\mathcal{C}$ be a fusion category. The \emph{upper central series} of  $\mathcal{C}$ is the sequence of fusion subcategories of $\mathcal{C}$ defined recursively by
$$\mathcal{C}^{(0)}=\mathcal{C}   \text { and }\mathcal{C}^{(n)}=\big(\mathcal{C}^{(n-1)}\big)_{ad} 
\ \text{ for all } n\geq 1.$$

The fusion category $\mathcal{C}$ is \emph{nilpotent} if its upper central series converges to the trivial fusion subcategory $\operatorname{Vec}$ of finite dimensional vector spaces; that is, there exists $n\in \mathbb N$ such that $\mathcal{C}^{(n)}=\operatorname{Vec}$ (see \cite{GN}, \cite{ENO1}). Alternatively, a fusion category is nilpotent if there is a sequence of fusion subcategories $\mathcal C_0=\operatorname{Vec} \subset \dots \subset \mathcal C_n=\mathcal C$ and finite groups $G_1, \dots, G_n$ such that $\mathcal C_i$ is a $G_i$-extension of $\mathcal C_{i-1}$ for all $i$. If, moreover, the groups $G_i$ are cyclic the category is said to be \emph{cyclically nilpotent}.
As is the case for finite groups, fusion categories of Frobenius-Perron dimension a power of a prime integer are known to be nilpotent by \cite[Theorem 8.28]{ENO1}. Also, when $\mathcal C$ is a nilpotent fusion category $\FPdim(X)^2$ divides $\FPdim(\mathcal C_{\mathrm{ad}})$ for all $X\in \mathcal{O(C)}$ \cite{GN}. We make repeated use of these result. 

A fusion category $\mathcal{C}$ is \emph{weakly group-theoretical}, respectively \emph{solvable}, if it is Morita equivalent to a nilpotent fusion category, respectively to a cyclically nilpotent fusion category \cite{ENO1}. Both of these Morita classes are closed under Deligne tensor product, Drinfeld centers and fusion subcategories. The Morita class of weakly group-theoretical fusion categories is also closed under group extensions and equivariantizations. On the other hand, the class of solvable categories is closed under extensions and equivariantizations by \emph{solvable} groups, and also by taking component categories of quotient categories. See \cite[Propositions 4.1, 4.5]{ENO1}.

More can be said about categories that come equipped with a braiding. Namely, for a braided solvable fusion category $\mathcal C$, we have that either $\mathcal C=\text{Vec}$ or $\mathcal {G(C)}$ is not trivial \cite[Proposition 4.5]{ENO2}. It is also known that braided nilpotent fusion categories are solvable \cite[Proposition 4.5]{ENO1}.

\subsection{Modular tensor categories}

Let $\mathcal{C}$ be a braided fusion category with braiding $\sigma$. A \emph{twist} on $\mathcal{C}$ is a  natural isomorphism $\theta:\Id_\mathcal{C} \to \Id_\mathcal{C}$ such that
	\begin{align}
	\theta_{X\otimes Y}=(\theta_X \otimes \theta_Y) \circ \sigma_{Y,X} \circ \sigma_{X,Y},
	\end{align}
	for all $X,Y \in \mathcal{C}$. A twist is called a \emph{ribbon structure} if $(\theta_X)^*=\theta_{X^*}$ for all $X \in \mathcal{C}$.

A \emph{pre-modular category} is a braided fusion category endowed with a compatible ribbon structure. Equivalently, a pre-modular tensor category is a braided fusion category equipped with a spherical structure. This follows since when we have a pivotal structure on $\mathcal C$, we can define a twist by $\theta_X = (\Tr_X\otimes \Id_X)\circ \sigma_{X,X}$ \cite{Br}. Moreover, if we started with a spherical structure, this twist is a ribbon structure.

Let $\mathcal C$ be a premodular category with twist $\theta$. A fusion subcategory $\mathcal D$ of $\mathcal C$ is \emph{isotropic} if $\theta$ restricts to the identity on  $\mathcal D$, i.e., if $\theta_X = \text{Id}_X$ for all $X \in \mathcal D$.

Let $\mathcal C$ be a premodular tensor category, with braiding $\sigma_{X,Y}:X\otimes Y\xrightarrow{\sim}Y\otimes X$. The \emph{S-matrix} $S$ of $\mathcal C$ is defined by $S:= \left(s_{X,Y}\right)_{X,Y \in \mathcal{O(\mathcal{C})}}$, where $s_{X,Y}=\Tr(\sigma_{Y,X}\sigma_{X,Y})$. We can obtain the entries of the $S$-matrix in terms of the twists, fusion rules, and quantum dimensions via the so-called \emph{balancing equation}\label{balancing}
	\begin{align}
	s_{X,Y}=\theta_X^{-1}\theta_Y^{-1} \sum\limits_{Z \in \mathcal{O(\mathcal{C})}} N_{XY}^Z \theta_Z \di_Z,
	\end{align}
for all $X,Y \in \mathcal{O(\mathcal{C})}$ \cite[Proposition 8.13.7]{EGNO}.	

A premodular tensor category $\mathcal C$ is said to be \emph{modular} if the $S$-matrix $S$ is non-degenerate.

\subsubsection{Centralizers in braided fusion categories}
Let $\mathcal{C}$ be a braided fusion category and let $\mathcal{K}$ be a fusion subcategory of $\mathcal{C}$. The \emph{M\"{u}ger centralizer} of $\mathcal{K}$ is the fusion subcategory $\mathcal K'$ of $\mathcal C$ consisting of all objects $Y$ in $\mathcal{C}$ such that
\begin{align}\label{centralizador}
\sigma_{Y,X}\sigma_{X,Y}=\id_{X\otimes Y}, \ \ \text{for all}\ X \in \ \mathcal{K} \ \ \ \ \ \ \text{\cite{Mu}}.
\end{align}  

If $\mathcal{C}$ is modular then $\mathcal{K}=\mathcal{K}''$ and $\dim(\mathcal{K})\dim(\mathcal{K}')=\dim(\mathcal{C})$ \cite[Theorem 3.2]{Mu}; moreover, $\mathcal{C}_\mathrm{pt}=\mathcal{C}_{\mathrm{ad}}'$ and  $\mathcal{C}_\mathrm{pt}'=\mathcal{C}_{\mathrm{ad}}$ \cite[Corollary 6.9]{GN}. 

A necessary and sufficient condition for $\mathcal{K}$ to be modular is $\mathcal{K} \cap \mathcal{K}' = \ \text{Vec}$. This tells us that $\mathcal C$ is modular if and only if $\mathcal C'=\text{Vec}.$ Moreover, if $\mathcal K$ is a modular subcategory of $\mathcal C$ then $\mathcal{K}'$ is also modular and $\mathcal{C} \simeq \mathcal{K} \boxtimes \mathcal{K}'$ as braided fusion categories \cite{Mu}. On the other hand, we say a braided fusion category $\mathcal C$ is \emph{symmetric} if it is equal to its centralizer, that is, $\mathcal C= \mathcal C'$. Odd-dimensional symmetric fusion categories are isotropic \cite[Corollary 2.7]{DGNO1}.
\begin{remark} Let $\mathcal{C}$ be a MTC and $\mathcal K$ be a fusion subcategory of $\mathcal C$. Then $\mathcal K \cap \mathcal K'$ is symmetric. 
\end{remark}

Let $\mathcal K$ be a fusion subcategory of a fusion category $\mathcal C$. The \emph{commutator} of $\mathcal K$ is the fusion subcategory $\mathcal K^{\mathrm{co}}$ is generated by all simple objects $X \in \mathcal C$ such that $X \otimes X^* \in \mathcal K$.

\begin{remark}\label{Cad_pt_symm}
If $\mathcal{C}$ is a braided fusion category, then $(\mathcal{C}_{\mathrm{ad}})_{\mathrm{pt}}$  is symmetric. 
\end{remark}
\begin{proof}

First note that $(\mathcal{C}_{\mathrm{ad}})'=(\mathcal{C}')^{co}$ \cite[Proposition 3.25]{DGNO2}, i.e, $X\in (\mathcal{C}_{\mathrm{ad}})'$ if and only if $X\otimes X^* \in \mathcal{C}'$. In particular, $\mathcal{C}_\mathrm{pt}\subseteq  (\mathcal{C}_{\mathrm{ad}})' $, and the claim follows since  $(\mathcal{C}_{\mathrm{ad}})_{\mathrm{pt}}  \subseteq \mathcal{C}_\mathrm{pt} \subseteq  (\mathcal{C}_{\mathrm{ad}})' \subseteq ((\mathcal{C}_{\mathrm{ad}})_{\mathrm{pt}})'.$
\end{proof}
\bigbreak


\subsubsection{Actions of $\mathcal{G(C)}$ on $\mathcal{O(C)}$ in MTCs.}

From now on let $\mathcal{C}$ be a pre-modular fusion category.

\begin{lemma}\label{gX=X}
Let $g$ be an invertible object in $\mathcal{C}_{\mathrm{ad}}$ such that $\theta_g=1$. Suppose $g\otimes X=X$ for all non-invertible simple $X\not\in \mathcal{C}_{\mathrm{ad}}$. Then the rows of the S-matrix corresponding to (the isomorphism classes of) $g$ and \textbf{1} are equal.
\end{lemma}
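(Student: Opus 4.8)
The plan is to check that the two rows agree entry by entry, i.e.\ that $s_{g,Y}=s_{\mathbf 1,Y}$ for every simple object $Y$; recall that $s_{\mathbf 1,Y}=\di_Y$ since the unit object is transparent. Because $g$ is invertible, $g\otimes Y$ is simple and $N^Z_{gY}=\delta_{Z,\,g\otimes Y}$, so the balancing equation collapses to $s_{g,Y}=\theta_g^{-1}\theta_Y^{-1}\,\theta_{g\otimes Y}\,\di_{g\otimes Y}=\theta_Y^{-1}\,\theta_{g\otimes Y}\,\di_g\di_Y$, using $\theta_g=1$ and multiplicativity of the quantum dimension. Furthermore $\di_g=1$: the object $g$ is invertible and, being in $\mathcal C_{\mathrm{ad}}$, lies in $(\mathcal C_{\mathrm{ad}})_{\mathrm{pt}}$, which is symmetric by Remark~\ref{Cad_pt_symm}; in a symmetric fusion category the twist of an invertible object equals its dimension, so $\theta_g=1$ forces $\di_g=1$. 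Hence the whole statement reduces to proving $\theta_{g\otimes Y}=\theta_Y$ for every simple $Y$.

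To do this I would split the simple objects into two overlapping families whose union is all of $\mathcal O(\mathcal C)$. Suppose first that $Y$ is invertible or $Y\in\mathcal C_{\mathrm{ad}}$. Then $g$ and $Y$ centralize each other: on one side, $g\in(\mathcal C_{\mathrm{ad}})_{\mathrm{pt}}\subseteq\mathcal C_{\mathrm{pt}}\subseteq(\mathcal C_{\mathrm{ad}})'$---the last inclusion being precisely what is proved in Remark~\ref{Cad_pt_symm}---so $g$ centralizes every object of $\mathcal C_{\mathrm{ad}}$; on the other side, any invertible object lies in $\mathcal C_{\mathrm{pt}}\subseteq(\mathcal C_{\mathrm{ad}})'$ and hence centralizes $g\in\mathcal C_{\mathrm{ad}}$. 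In either situation the double braiding $\sigma_{Y,g}\circ\sigma_{g,Y}$ is the identity of $g\otimes Y$, and feeding this into the twist equation $\theta_{g\otimes Y}=(\theta_g\otimes\theta_Y)\circ\sigma_{Y,g}\circ\sigma_{g,Y}$---a scalar identity, since $g\otimes Y$ is simple---gives $\theta_{g\otimes Y}=\theta_g\theta_Y=\theta_Y$. If instead $Y$ is non-invertible with $Y\notin\mathcal C_{\mathrm{ad}}$, then $g\otimes Y\cong Y$ by hypothesis, so $\theta_{g\otimes Y}=\theta_Y$ by naturality of the twist. Since every simple object falls into one of these two families, we conclude $s_{g,Y}=\di_g\di_Y=\di_Y=s_{\mathbf 1,Y}$ for all $Y$.

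The main point of the argument is the bookkeeping in the second paragraph: the two centralizer containments supplied by Remark~\ref{Cad_pt_symm} cover exactly the simple objects lying in $\mathcal C_{\mathrm{ad}}$ or in $\mathcal C_{\mathrm{pt}}$, so that only the non-invertible simples outside $\mathcal C_{\mathrm{ad}}$ are left over, and these are handled by the hypothesis $g\otimes X\cong X$. Everything else---the degeneration of the balancing equation for an invertible object, the equality $\di_g=1$, and the use of the twist equation in the presence of trivial double braiding---is routine.
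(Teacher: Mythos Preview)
Your proof is correct and follows essentially the same strategy as the paper's: split the simple objects into those in $\mathcal C_{\mathrm{ad}}\cup\mathcal C_{\mathrm{pt}}$ (handled via the centralizer inclusions of Remark~\ref{Cad_pt_symm}) and the non-invertible simples outside $\mathcal C_{\mathrm{ad}}$ (handled by the hypothesis $g\otimes X\cong X$). The only cosmetic difference is that where the paper directly invokes \cite[Proposition~2.5]{Mu} to obtain $s_{g,X}=\di_g\di_X$ for $X$ in $\mathcal C_{\mathrm{ad}}$ or $\mathcal C_{\mathrm{pt}}$, you unwind the same fact through the twist equation $\theta_{g\otimes Y}=(\theta_g\otimes\theta_Y)\circ\sigma_{Y,g}\circ\sigma_{g,Y}$ combined with the collapsed balancing equation; you also make the step $\di_g=1$ explicit, which the paper silently absorbs when writing $s_{g,X}=\di_X$ rather than $\di_g\di_X$.
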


\begin{proof}
Let $g$ be as above. The balancing equation \eqref{balancing}  yields
\begin{equation}\label{G2}
    s_{g,X}= \theta_g^{-1}\theta_X^{-1}\di_X\theta_{X}= \di_X, \text{ for all non-invertible simple $X\not\in \mathcal C_{\mathrm{ad}}$},
\end{equation}
that is, for all simple $X$ such that $X\not\in \mathcal {C}_{\mathrm{ad}}\cup \mathcal{C}_\mathrm{pt}.$

Now we compute $s_{g,X}$ for $X\in \mathcal{C}_\mathrm{pt}\cup \mathcal {C}_{\mathrm{ad}}$.  Assume first that $X\in \mathcal C_{\mathrm{ad}}$. Since $g\in (\mathcal{C}_\mathrm{pt})\subseteq  (\mathcal {C}_{\mathrm{ad}})'$, it follows from \cite[Proposition 2.5]{Mu} that 
\begin{equation}\label{G1}
s_{g,X} =\di_X  .
\end{equation}
Lastly, assume $X\in \mathcal{C}_\mathrm{pt}$. Since $g\in (\mathcal {C}_{\mathrm{ad}})_{\mathrm{pt}}$ and $\mathcal {C}_{\mathrm{pt}}\subseteq ((\mathcal {C}_{\mathrm{ad}})_{\mathrm{pt}})'$, again, by \cite[Proposition 2.5]{Mu}, we get 
\begin{equation}\label{G3}
s_{g,X} =\di_X.
\end{equation}
Now the result follows from equations \eqref{G2}, \eqref{G1} and \eqref{G3}.
\end{proof}

For every $X\in \mathcal {O(C)}$ consider the stabilizer subgroup $G[X]$ of $\mathcal{G(C)}$ given by $$G[X]=\{ g \in \mathcal{G(C)} :  g \otimes X \simeq X\}.$$ For any family $\mathcal F\subseteq \mathcal{O(C)}$, we define
\begin{equation*}
   G[\mathcal F] := \bigcap\limits_{X\in \mathcal F } G[X].
\end{equation*}
Note that $G[X]$ is a subgroup of $\mathcal{G}(\mathcal{C}_{\mathrm{ad}})$ for all simple $X \in \mathcal C$, and thus so is $G[\mathcal F]$. 

From the previous Lemma we get the following Corollary.

\begin{cor}\label{G} Suppose $(\mathcal{C}_{\mathrm{ad}})_{\mathrm{pt}}$ is isotropic and consider the family $$\mathcal F=\{X\in \mathcal{O(C)} \ | \  X\not\in \mathcal{G(C)}, \  X\not\in \mathcal{C}_{\mathrm{ad}}\}.$$
Then
\begin{enumerate}[leftmargin=*, label=(\alph*)]
\item The rows of the S-matrix that correspond to the objects of $G[\mathcal F]$ are all equal. 
\item If $\mathcal{C}$ is modular then $G[\mathcal F] =\{ 1\}$.
\end{enumerate}
\end{cor}


 \section{Results for classification of MTCs by dimension}\label{section: cd_p}

In this section, we study MTCs whose irreducible degrees are a multiple of an odd prime number $p$. In particular, we study MTCs that are integral and whose adjoint subcategory has dimension a power of $p$. We also need some more general auxiliary results that we prove in this section, see Proposition ~\ref{p2} and Lemma ~\ref{invertible_square free}.

\subsection{Non-pointed MTCs with irreducible degrees having a common factor $p$.}

In this subsection, let $\mathcal{C}$ be a non-pointed MTC and $p$ be an odd prime number such that  $$\cd(\mathcal C) \subset \{1\} \cup p \mathbb{Z}.$$

\begin{pro} \label{cyclic}
 If $\mathcal{G}(\mathcal{C}_{\mathrm{ad}})$ is a non-trivial p-group, then it is not cyclic. In particular, $|\mathcal{G}(\mathcal{C}_{\mathrm{ad}})| \geq p^2$.
\end{pro}

\begin{proof}
Assume $\mathcal{G}(\mathcal{C}_{\mathrm{ad}})$ is a cyclic group of order $p^k$, for some $k\in \mathbb{N}$. For every $i \in \{0,\dots, k\}$ denote by $H_i$ the unique subgroup of $\mathcal{G}(\mathcal{C}_{\mathrm{ad}})$ of order $p^i$. Note that
\begin{equation}\label{cyclic 1}
 H_0=\{e\} \subset H_1 \subset \dots \subset H_k.
\end{equation}
 We claim that there exists $1\leq l\leq k$ such that $H_l \subseteq G[X]$ for every non-invertible simple object $X\in \mathcal C$. As $p$ is odd, this is a contradiction by Corollary \ref{G}.  Indeed, taking the Frobenius-Perron dimension of both sides of 
\begin{equation*}
    X\otimes X^*= \bigoplus\limits_{g \in G[X]} g \oplus \bigoplus\limits_{Y \in \mathcal{C}_{\mathrm{ad}}, \di_Y>1} N_{XX^*}^Y Y,
\end{equation*}
  we obtain that $p$ divides $|G[X]|$. Thus $G[X]$ has order $p^{j_X}$, where $1\leq j_X \leq k$.
 
 Define $l := \min\limits_{X} j_X \geq 1$ and consider the subgroup $H_l$ of $\mathcal{G}(\mathcal{C}_{\mathrm{ad}})$. 
 By Equation \eqref{cyclic 1}, we have that $H_l \subseteq G[X]$ for every simple non-invertible $X$ in $\mathcal{C}$ and then the claim follows. 
\end{proof}
 
%


\begin{cor}\label{cyclic 2}
It follows that
\begin{enumerate}[leftmargin=*, label=(\alph*)]
\item \label{cyclic p-group} If $\mathcal{G}(\mathcal{C})$ is a cyclic p-group, then $\mathcal{G}(\mathcal{C}_{\mathrm{ad}})$ is trivial. 
\item \label{solvable} If $\mathcal{C}$ is solvable, then $\mathcal{G(C)}$ is not a cyclic p-group. 
\end{enumerate}
\end{cor}
\begin{proof}
\ref{cyclic p-group} If $\mathcal{G(C)}$ is a cyclic $p$-group, so is its subgroup $\mathcal{G}(\mathcal{C}_{\mathrm{ad}})$. By Proposition \ref{cyclic}, $\mathcal{G}(\mathcal{C}_{\mathrm{ad}})$ must be trivial.

\ref{solvable} This is a direct consequence of 
part \ref{cyclic p-group} since $\mathcal{C}_{\mathrm{ad}}$ contains a non-trivial invertible object \cite[Proposition 4.5]{ENO1}.
\end{proof}

\subsection{MTCs with dimension a power of $p$.}
In this work we are mostly interested in odd-dimensional categories, but some results hold also for $p = 2$. We do not assume that $p$ is odd unless otherwise stated. 

It is known that MTCs of dimension $p^n$ for $n\leq 4$ are pointed, see for example \cite{DN} for the result when $n=4$. We prove in this section that this is no longer true for $n=5$. For this, we need the following auxiliary proposition.

\begin{pro}\label{p2}
Let $\mathcal{C}$ be an integral MTC and $p$ be a prime number that divides $\FPdim(\mathcal{C})$. Then $\FPdim(\mathcal{C}_{\mathrm{ad}}) \ne p^2$.  
\end{pro}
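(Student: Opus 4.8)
The plan is to argue by contradiction: suppose $\mathcal{C}$ is an integral modular tensor category, $p \mid \FPdim(\mathcal{C})$, and $\FPdim(\mathcal{C}_{\mathrm{ad}}) = p^2$. Since $\mathcal{C}_{\mathrm{ad}}$ has prime-power dimension it is nilpotent \cite{GN}, hence — being also braided — solvable, and in particular it contains a non-trivial invertible object, so $\mathcal{G}(\mathcal{C}_{\mathrm{ad}}) \neq \{1\}$. Because $\mathcal{C}$ is modular, $\mathcal{U(C)} \cong \mathcal{G(C)}$ and the trivial component of the universal grading is $\mathcal{C}_{\mathrm{ad}}$, so $\FPdim(\mathcal{C}) = |\mathcal{G(C)}| \cdot p^2$. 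The first task is to pin down the structure of $\mathcal{C}_{\mathrm{ad}}$: a nilpotent fusion category of dimension $p^2$ is pointed (for a nilpotent category $\FPdim(X)^2 \mid \FPdim(\mathcal{C}_{\mathrm{ad}})$ for all simple $X$, and here $\FPdim((\mathcal{C}_{\mathrm{ad}})_{\mathrm{ad}}) \mid p^2$ forces every simple of $\mathcal{C}_{\mathrm{ad}}$ to have dimension $1$). So $\mathcal{C}_{\mathrm{ad}} = (\mathcal{C}_{\mathrm{ad}})_{\mathrm{pt}}$ is pointed of dimension $p^2$, hence $\mathcal{G}(\mathcal{C}_{\mathrm{ad}})$ is an abelian group of order $p^2$, i.e. $\mathbb{Z}/p^2$ or $(\mathbb{Z}/p)^2$.

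Next I would bring in the centralizer relations. Since $\mathcal{C}$ is modular, $\mathcal{C}_{\mathrm{pt}} = (\mathcal{C}_{\mathrm{ad}})'$ and $\dim(\mathcal{C}_{\mathrm{ad}})\dim((\mathcal{C}_{\mathrm{ad}})') = \dim(\mathcal{C})$, so $\dim(\mathcal{C}_{\mathrm{pt}}) = \FPdim(\mathcal{C})/p^2 = |\mathcal{G(C)}|$, which is of course consistent with $\mathcal{C}_{\mathrm{pt}}$ being the pointed subcategory generated by $\mathcal{G(C)}$. The key point is that $\mathcal{C}_{\mathrm{ad}} \subseteq \mathcal{C}_{\mathrm{pt}}$ (the adjoint is pointed), so $\mathcal{C}_{\mathrm{ad}}$ is a subcategory of $\mathcal{C}_{\mathrm{pt}} = (\mathcal{C}_{\mathrm{ad}})'$, which says precisely that $\mathcal{C}_{\mathrm{ad}}$ is \emph{symmetric}. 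Now I want to derive a contradiction from a symmetric pointed braided fusion category of dimension $p^2$ sitting inside a modular category as the adjoint subcategory. One route: a symmetric fusion category of odd dimension (when $p$ is odd) is Tannakian, $\mathcal{C}_{\mathrm{ad}} \simeq \Rep(G)$ with $|G| = p^2$, hence $G$ abelian; but more usefully, I would use that $\mathcal{C}_{\mathrm{ad}}$ symmetric means the universal grading group acts on $\mathcal{C}_{\mathrm{ad}}$ and combine this with the fact that $\mathcal{C}_{\mathrm{ad}}$ is the \emph{adjoint} — its own universal grading must be trivial, i.e. $(\mathcal{C}_{\mathrm{ad}})_{\mathrm{ad}} = \mathcal{C}_{\mathrm{ad}}$, forcing $\mathcal{C}_{\mathrm{ad}} = \mathcal{C}_{\mathrm{ad}}^{(1)} = \cdots$, which contradicts nilpotency of $\mathcal{C}_{\mathrm{ad}}$ unless $\mathcal{C}_{\mathrm{ad}} = \operatorname{Vec}$, contradicting $\FPdim(\mathcal{C}_{\mathrm{ad}}) = p^2$.

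Let me restructure that last step, since it is the crux and I should not hand-wave it. The clean statement is: for a pointed modular category $\mathcal{C}$ of dimension $n$, or more generally, I should use Corollary \ref{intersection} and the machinery of $\textbf{G}$. Actually the sharpest tool available in the excerpt is: if $(\mathcal{C}_{\mathrm{ad}})_{\mathrm{pt}}$ is isotropic then $\textbf{G} = \{1\}$. Since $\mathcal{C}_{\mathrm{ad}}$ is pointed, $(\mathcal{C}_{\mathrm{ad}})_{\mathrm{pt}} = \mathcal{C}_{\mathrm{ad}}$. If $\mathcal{C}$ were not pointed, then combining that $\mathcal{C}_{\mathrm{ad}}$ is abelian of order $p^2$ with an analysis of the $G[X]$ for non-invertible simples $X \notin \mathcal{C}_{\mathrm{ad}}$ — as in the proof of Proposition \ref{cyclic}, $X \otimes X^* = \bigoplus_{g \in G[X]} g \oplus (\text{higher-dim pieces in } \mathcal{C}_{\mathrm{ad}})$, and since $\FPdim(X)^2 = |G[X]| + (\text{stuff})$ with all terms dividing into $\FPdim(\mathcal{C}_{\mathrm{ad}}) = p^2$ — one forces $|G[X]|$ to be a nonzero power of $p$, hence $H_1 \subseteq G[X]$ for all such $X$, so $H_1 \subseteq \textbf{G}$, contradicting $\textbf{G} = \{1\}$ (and the isotropy hypothesis holds for free when $p$ is odd). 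This disposes of the non-pointed case; in the pointed case $\mathcal{C} = \mathcal{C}_{\mathrm{pt}}$, but then $\mathcal{C}_{\mathrm{ad}} = (\mathcal{C}_{\mathrm{pt}})_{\mathrm{ad}}$ and for a pointed modular category $\mathcal{C}_{\mathrm{ad}} = \mathcal{C}_{\mathrm{pt}}' \cap \mathcal{C}_{\mathrm{pt}} = \mathcal{C} \cap \mathcal{C}'$... I would instead simply note that a pointed modular category has $\mathcal{C}_{\mathrm{ad}}$ equal to the radical of the associated quadratic form, and a direct order count (Müller's formula $\dim(\mathcal{C}_{\mathrm{pt}})\dim(\mathcal{C}_{\mathrm{ad}}) = \dim(\mathcal{C})$ with $\mathcal{C}_{\mathrm{pt}} = \mathcal{C}$) gives $\dim(\mathcal{C}_{\mathrm{ad}}) = 1 \neq p^2$. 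The main obstacle, and where I would spend the most care, is the non-pointed case: making rigorous that $\FPdim(\mathcal{C}_{\mathrm{ad}}) = p^2$ together with $\mathcal{C}_{\mathrm{ad}}$ pointed and symmetric forces every non-invertible simple to be fixed by a common $\mathbb{Z}/p$, and handling the subtlety that when $p = 2$ the isotropy hypothesis of Corollary \ref{intersection} must be verified separately (possibly requiring the $p=2$ case to be argued by a different, more hands-on route using the $S$-matrix directly).
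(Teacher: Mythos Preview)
Your final strategy (the third paragraph) is correct and is essentially the paper's approach: show $\mathcal{C}_{\mathrm{ad}}$ is pointed of order $p^2$, show every non-invertible simple $X$ is fixed by all of $\mathcal{G}(\mathcal{C}_{\mathrm{ad}})$, and deduce an $S$-matrix contradiction. Two points of comparison are worth noting.

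First, the paper reaches the key fact ``$h \otimes X = X$ for every non-invertible simple $X$'' by a slightly different route than your $X \otimes X^*$ decomposition: since every graded component has $\FPdim(\mathcal{C}_g) = p^2$ and simples have dimension $1$ or $p$, the equation $p^2 = a_g + b_g p^2$ forces each component either to consist of $p^2$ invertibles or to contain a \emph{unique} simple of dimension $p$; uniqueness in the latter case immediately gives $h \otimes X \simeq X$. Your argument via $X \otimes X^* \in \mathcal{C}_{\mathrm{ad}}$ (pointed, hence $|G[X]| = \FPdim(X)^2 = p^2$) is equally valid and arrives at the same conclusion.

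Second, and this resolves your flagged $p=2$ issue: the paper does \emph{not} invoke Corollary~\ref{intersection} (which needs full isotropy of $(\mathcal{C}_{\mathrm{ad}})_{\mathrm{pt}}$). Instead it cites \cite[Corollary 2.7]{DGNO1} to produce a single non-trivial $h \in \mathcal{C}_{\mathrm{ad}}$ with $\theta_h = 1$ --- such an $h$ exists in any symmetric fusion category of dimension bigger than $2$ --- and then computes $s_{h,X} = \di_X$ for all simple $X$ directly (via balancing for the dimension-$p$ simples, and via $h \in \mathcal{C}_{\mathrm{ad}} = \mathcal{C}_{\mathrm{pt}}'$ for the invertibles). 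This is precisely the ``hands-on $S$-matrix route'' you anticipated, and it treats all primes uniformly.

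Two minor cleanups: your second paragraph is a dead end --- for a pointed $\mathcal{C}_{\mathrm{ad}}$ one has $(\mathcal{C}_{\mathrm{ad}})_{\mathrm{ad}} = \operatorname{Vec}$, not $\mathcal{C}_{\mathrm{ad}}$, so no nilpotency contradiction arises there. And the pointed case is immediate: if $\mathcal{C}$ is pointed then $\mathcal{C}_{\mathrm{ad}} = \operatorname{Vec}$, so $\FPdim(\mathcal{C}_{\mathrm{ad}}) = 1 \neq p^2$; no appeal to M\"uger's formula or quadratic forms is needed.
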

\begin{proof}
Assume $\FPdim(\mathcal{C}_{\mathrm{ad}}) = p^2$. Then $\mathcal{C}$ is a non-pointed nilpotent category. Furthermore, $\mathcal{C}_{\mathrm{ad}}$ is solvable and thus it contains a non-trivial invertible object \cite[Proposition 4.5]{ENO2}. This together with the fact that every simple object has Frobenius-Perron dimension 1 or $p$ \cite[Corollary 5.3]{GN} implies that $\mathcal{C}_{\mathrm{ad}}$ is necessarily pointed. 

Note that by \cite[Corollary 2.7]{DGNO1} there is a non-trivial object $h \in \mathcal{C}_{\mathrm{ad}}$ such that  $\theta_h=1$. We show that the rows of the S-matrix corresponding to (the isomorphism classes of) $h$ and $\textbf{1}$ are equal. 

Given $g \in \mathcal{U(C)}$, let $a_g$ and $b_g$ denote the number of isomorphism classes of simple objects of dimension 1 and $p$ in the component $\mathcal{C}_g$, respectively. Since  $p^2 = \FPdim(\mathcal{C}_g) = a_g + b_g p^2$, the following holds:
\begin{itemize}
    \item Either $\mathcal{C}_g$ has exactly $p^2$ simple objects, all of which are invertible, or
    \item $\mathcal{C}_g$ has exactly one simple object, which has dimension $p$.
\end{itemize}

Let $X$ be a simple object of Frobenius-Perron dimension $p$ and let $g\in \mathcal U(C)$  such that $X \in \mathcal{C}_g$. As $X$ is the unique simple object in $\mathcal{C}_g$ we have $h \otimes X =X$. Using the balancing equation we obtain
\begin{equation}\label{6}
    s_{h,X}= \theta_h^{-1}\theta_X^{-1} \di_X \theta_X= \di_X.
\end{equation}

On the other hand, since $h\in \mathcal{C}_{\mathrm{ad}} =\mathcal{C}'_{\mathrm{pt}}$ it follows from \cite[Proposition 2.5]{Mu} that for all invertible object $g \in \mathcal{C}$ we have
\begin{equation}\label{5}
    s_{h,g}= \di_h\di_g= \di_g. 
\end{equation}

Equations \eqref{6} and \eqref{5} prove our claim.  This is a contradiction since $S$ is invertible. 
\end{proof}

Note that this yields an alternate proof of Lemma 4.11 of \cite{DN}.
\begin{cor}\label{p4}
Let $p$ be a prime. MTCs of Frobenius-Perron dimension $p^4$ are pointed.
\end{cor}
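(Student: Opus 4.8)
The plan is to prove that a modular tensor category $\mathcal{C}$ of Frobenius-Perron dimension $p^4$ must be pointed, reducing everything to the structure theorems available for prime-power dimension together with Proposition~\ref{p2}. First I would note that by \cite{GN} a fusion category of prime-power dimension is nilpotent, hence $\mathcal{C}$ is nilpotent; being a braided nilpotent fusion category it is solvable, and in particular $\mathcal{C}$ is integral (weakly integral of prime-power dimension forces integrality since the dimensions of simples are powers of $p$ by \cite[Corollary 5.3]{GN}, and they square-divide $\FPdim(\mathcal{C}_{\mathrm{ad}})$ when $\mathcal{C}$ is nilpotent). So $\mathcal{C}$ is an integral modular tensor category of dimension $p^4$.

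Next I would analyze the possibilities for $\FPdim(\mathcal{C}_{\mathrm{ad}})$, which is a power of $p$ dividing $p^4$. If $\FPdim(\mathcal{C}_{\mathrm{ad}}) = 1$ then $\mathcal{C}_{\mathrm{ad}} = \operatorname{Vec}$, so $\mathcal{C} = \mathcal{C}_{\mathrm{ad}}$ is trivial... more precisely the universal grading group has order $p^4$ and equals $\mathcal{G(C)}$, forcing $\mathcal{C}$ pointed. If $\FPdim(\mathcal{C}_{\mathrm{ad}}) = p$, then since $\FPdim(X)^2 \mid \FPdim(\mathcal{C}_{\mathrm{ad}}) = p$ for all simple $X$ (nilpotency), every simple object has dimension $1$, so $\mathcal{C}$ is pointed. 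The case $\FPdim(\mathcal{C}_{\mathrm{ad}}) = p^2$ is exactly ruled out by Proposition~\ref{p2}, since $p \mid p^4 = \FPdim(\mathcal{C})$. This leaves $\FPdim(\mathcal{C}_{\mathrm{ad}}) = p^3$ and $\FPdim(\mathcal{C}_{\mathrm{ad}}) = p^4$.

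For $\FPdim(\mathcal{C}_{\mathrm{ad}}) = p^4$, the universal grading is trivial, i.e. $\mathcal{G(C)} \cong \mathcal{U(C)}$ is trivial; but a nontrivial nilpotent (hence solvable) braided fusion category always has a nontrivial invertible object by \cite[Proposition 4.5]{ENO2}, a contradiction — so this case cannot occur unless $\mathcal{C} = \operatorname{Vec}$, which is vacuously pointed. For $\FPdim(\mathcal{C}_{\mathrm{ad}}) = p^3$: the universal grading group $\mathcal{G(C)}$ has order $p$, so $\mathcal{C}_{\mathrm{pt}}$ has dimension (at least) $p$, and by modularity $\dim(\mathcal{C}_{\mathrm{pt}})\dim(\mathcal{C}_{\mathrm{ad}}) = \dim(\mathcal{C})$ gives $\dim(\mathcal{C}_{\mathrm{pt}}) = p$, so $\mathcal{C}_{\mathrm{pt}}$ is exactly the pointed part of order $p$ and $\mathcal{C}_{\mathrm{ad}} = \mathcal{C}_{\mathrm{pt}}'$ has dimension $p^3$. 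Now apply the same dimension count one level down: $\mathcal{C}_{\mathrm{ad}}$ is itself a nilpotent braided fusion category of dimension $p^3$, and simple objects of $\mathcal{C}$ have dimension $1$ or $p$ (as $\FPdim(X)^2 \mid \FPdim(\mathcal{C}_{\mathrm{ad}}) = p^3$ forces $\FPdim(X) \in \{1,p\}$). I would then run a counting argument on the graded components $\mathcal{C}_g$ entirely analogous to the one in the proof of Proposition~\ref{p2}: each component has $\FPdim = p^3$, so writing $a_g, b_g$ for the numbers of simples of dimension $1$ and $p$, we get $p^3 = a_g + b_g p^2$, constraining the structure, and then I would exhibit a nontrivial $h \in \mathcal{C}_{\mathrm{ad}}$ with $\theta_h = 1$ (exists by \cite[Corollary 2.7]{DGNO1} applied to $(\mathcal{C}_{\mathrm{ad}})_{\mathrm{pt}}$, which is odd when $p$ is odd — but here $p$ may be $2$, so I must be more careful) fixing all non-invertible simples, contradicting invertibility of $S$ via Lemma~\ref{gX=X} or Corollary~\ref{G}.

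The main obstacle I anticipate is the case $\FPdim(\mathcal{C}_{\mathrm{ad}}) = p^3$ when $p = 2$, where the isotropy argument (finding a nontrivial $h$ with $\theta_h = 1$) is not automatic and the clean odd-dimensional tools of Section~\ref{section: preliminaries} do not directly apply. To handle it I expect to need a finer case analysis of rank-$p^3$ nilpotent MTCs — possibly invoking that $\mathcal{C}_{\mathrm{ad}}$ of dimension $p^3$ with a dimension-$p$ simple cannot be modular or cannot have the right centralizer structure, or directly classifying modular categories of dimension $p^3$ (a known small case) to see none arises here — and then closing the argument by the $S$-matrix degeneracy contradiction as above. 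If the present paper intends $p$ odd throughout this corollary, the $p=2$ difficulty evaporates and the proof is a short bookkeeping exercise over the five dimension cases.
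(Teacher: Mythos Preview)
Your case analysis correctly disposes of $\FPdim(\mathcal{C}_{\mathrm{ad}})\in\{1,p,p^2,p^4\}$, but the case $\FPdim(\mathcal{C}_{\mathrm{ad}})=p^3$ is a genuine gap, and the strategy you sketch for it does not work even for odd $p$. The mechanism in Proposition~\ref{p2} succeeds because each non-trivial graded component of dimension $p^2$ containing a simple of dimension $p$ must contain \emph{exactly one} simple, which is then forced to be fixed by every $h\in(\mathcal{C}_{\mathrm{ad}})_{\mathrm{pt}}$. When the components have dimension $p^3$, the equation $p^3=a_g+b_gp^2$ allows $b_g$ to be anything from $0$ to $p$, so a component may contain several simples of dimension $p$ and there is no reason a given $h$ should fix them rather than permute them. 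In fact Corollary~\ref{intersection} says precisely that (for odd $p$) no nontrivial $h$ can fix all non-invertible simples outside $\mathcal{C}_{\mathrm{ad}}$; this is a consequence of modularity, not a contradiction to it, so the Lemma~\ref{gX=X}/Corollary~\ref{G} route is a dead end here.

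The paper avoids this case entirely and gives a three-line proof: one first observes (using \cite[Lemma~3.2]{DLD}) that if $\mathcal{C}$ is not pointed then $p^2$ divides $\FPdim(\mathcal{C}_{\mathrm{pt}})$, hence $\FPdim(\mathcal{C}_{\mathrm{pt}})=p^2$ and therefore $\FPdim(\mathcal{C}_{\mathrm{ad}})=p^2$; this is impossible by Proposition~\ref{p2}. The external input from \cite{DLD} is exactly what rules out $\FPdim(\mathcal{C}_{\mathrm{pt}})=p$ (equivalently $\FPdim(\mathcal{C}_{\mathrm{ad}})=p^3$), and it works uniformly for all primes including $p=2$. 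If you want a self-contained argument for that step, one clean way (for odd $p$) is: if $\FPdim(\mathcal{C}_{\mathrm{pt}})=p$ then either $\mathcal{C}_{\mathrm{pt}}$ is modular, in which case $\mathcal{C}\simeq\mathcal{C}_{\mathrm{pt}}\boxtimes\mathcal{C}_{\mathrm{ad}}$ and $\mathcal{C}_{\mathrm{ad}}$ is a solvable modular category of dimension $p^3$ with no nontrivial invertibles, contradicting \cite[Proposition~4.5]{ENO2}; or $\mathcal{C}_{\mathrm{pt}}$ is Tannakian, and then de-equivariantizing as in the proof of Theorem~\ref{p5} reduces to a modular category of dimension $p^2$, known to be pointed. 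Either way the $p^3$ case requires an idea you have not supplied.
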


\begin{proof}
It is easy to see that $p^2$ divides $\FPdim(\mathcal{C}_\mathrm{pt})$. If $\mathcal{C}$ is not pointed we have $\FPdim({\mathcal C}_{\mathrm{pt}})=p^2$  \cite[Lemma 3.2]{DLD}. Thus the Frobenius-Perron dimension of $\mathcal C_{\mathrm{ad}}$ is also $p^2$, which cannot happen by Proposition \ref{p2}.
\end{proof}

Roughly speaking, $G$-gauging is a 2-step process to construct a new modular category from a given modular category with a categorical action of the group $G$ \cite{CGPW}. More specifically, first we extend the modular category to a $G$-crossed braided fusion category, following Etingof-Nikshych-Ostrik extension theory \cite{ENO3}, and then take the equivariantization of the resulting category.

\begin{theorem}\label{p5}Let $p$ be an odd prime. MTCs of Frobenius-Perron dimension $p^5$ are either pointed or a $\mathbb Z_p\times\mathbb Z_p$-gauging of a pointed MTC of Frobenius-Perron dimension $p$.
\end{theorem}

\begin{proof}
Let $\mathcal{C}$ be an integral MTC of Frobenius-Perron dimension $p^5$, and suppose that $\mathcal C$ is not pointed. Then by \cite[Lemma 3.2]{DLD} and Proposition \ref{p2} we get that $\FPdim(\mathcal{C}_{pt})=p^2$, 
and so $\mathcal C_{pt}\subseteq \mathcal C_{ad}$ \cite[Lemma 3.4]{DLD}. Hence $\mathcal C_{pt}$ is an odd-dimensional symmetric subcategory of $\mathcal C$ and thus $\mathcal C_{pt}\simeq \Rep(G)$ for some group $G$ of order $p^2$. Moreover, by Proposition \ref{cyclic}, we have that $G \simeq \mathbb Z_p \times \mathbb Z_p.$ 
	
Now, we consider the de-equivariantization $\mathcal C_G$ of $\mathcal C$ by the Tannakinan subcategory $\Rep(G).$ 
Let $\mathcal C_G^0$ denote the trivial component with respect to the associated $G$-grading. Since $\mathcal C$ is non-degenerate then $C_G^0$ is non-degenerate \cite[Proposition 4.6]{DGNO2}. 
Moreover, we have that $\FPdim(\mathcal C_G^0)=\FPdim(\mathcal C)/|G|^2=p$. 
Hence $\mathcal C$ is the gauging (see \cite{CGPW} for details on this construction) of a MTC $\mathcal D$ of dimension $p$ by the group $G\simeq \mathbb Z_p \times \mathbb Z_p$, as desired.
\end{proof}

\begin{lemma}\label{Cad}
Let $\mathcal C$ be a fusion category and $p$ a prime number such that $\FPdim(\mathcal{C})= p^3$. Then $\mathcal{C}_{\mathrm{ad}}$ is pointed.  
\end{lemma}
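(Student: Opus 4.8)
The statement is that a fusion category $\mathcal{C}$ with $\FPdim(\mathcal{C}) = p^3$ has pointed adjoint subcategory $\mathcal{C}_{\mathrm{ad}}$. The plan is to exploit nilpotency together with the divisibility constraints on the dimensions of simple objects. First I would recall that $\mathcal{C}$, having prime-power Frobenius-Perron dimension, is nilpotent by \cite{GN}. Consequently, for every simple $X \in \mathcal{O(C)}$, one has $\FPdim(X)^2 \mid \FPdim(\mathcal{C}_{\mathrm{ad}})$, as recorded in the Preliminaries. Since $\FPdim(\mathcal{C}_{\mathrm{ad}})$ is itself a power of $p$ dividing $p^3$, we get $\FPdim(\mathcal{C}_{\mathrm{ad}}) \in \{1, p, p^2, p^3\}$.

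The next step is to rule out the possibility that $\mathcal{C}_{\mathrm{ad}}$ contains a non-invertible simple object, which would force $\FPdim(X)^2 \geq p^2$, hence $\FPdim(\mathcal{C}_{\mathrm{ad}}) \in \{p^2, p^3\}$ and $\FPdim(X) = p$ for every non-invertible simple $X$ in $\mathcal{C}_{\mathrm{ad}}$. I would use the faithful universal grading $\mathcal{C} = \oplus_{g \in \mathcal{U(C)}} \mathcal{C}_g$ with trivial component $\mathcal{C}_{\mathrm{ad}}$: all components have equal dimension, so $\FPdim(\mathcal{C}) = |\mathcal{U(C)}| \cdot \FPdim(\mathcal{C}_{\mathrm{ad}})$. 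If $\FPdim(\mathcal{C}_{\mathrm{ad}}) = p^3$ then $\mathcal{C} = \mathcal{C}_{\mathrm{ad}}$, so the universal grading is trivial, which forces $\mathcal{C}_{\mathrm{ad}} = \mathcal{C}$ to equal its own adjoint repeatedly — contradicting nilpotency unless $\mathcal{C} = \operatorname{Vec}$. So $\FPdim(\mathcal{C}_{\mathrm{ad}}) \in \{1, p, p^2\}$; the cases $1$ and $p$ are automatically pointed (dimension $1$ trivially, dimension $p$ because a fusion category of prime dimension is pointed), so the only remaining case is $\FPdim(\mathcal{C}_{\mathrm{ad}}) = p^2$, with $|\mathcal{U(C)}| = p$.

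In the case $\FPdim(\mathcal{C}_{\mathrm{ad}}) = p^2$, I would argue that $\mathcal{C}_{\mathrm{ad}}$ is again nilpotent (subcategory of a nilpotent category, or directly of prime-power dimension), hence solvable, hence contains a non-trivial invertible object by \cite[Proposition 4.5]{ENO2}; thus $\FPdim((\mathcal{C}_{\mathrm{ad}})_{\mathrm{pt}})$ is a non-trivial power of $p$, i.e. $p$ or $p^2$. If it is $p^2$ we are done. Suppose instead $\FPdim((\mathcal{C}_{\mathrm{ad}})_{\mathrm{pt}}) = p$; then $\mathcal{C}_{\mathrm{ad}}$ would have a unique non-invertible simple object $X$ with $\FPdim(X) = p$ (since $p^2 = p + \FPdim(X)^2$ forces $\FPdim(X)^2 = p^2 - p$, which is not a perfect power of $p$ unless... ) — here I should be careful: $p^2 = (\text{number of invertibles}) + \sum \FPdim(X)^2$ over non-invertible simples, and each such square is a power of $p$ at least $p^2$, so there can be at most one non-invertible simple and then $p^2 = p^a + p^2$ with $a \geq 1$, impossible. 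Hence there are no non-invertible simples at all, and $\mathcal{C}_{\mathrm{ad}}$ is pointed.

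The main obstacle I anticipate is the bookkeeping in the dimension equation for $\mathcal{C}_{\mathrm{ad}}$: one must combine the nilpotency constraint $\FPdim(X)^2 \mid \FPdim(\mathcal{C}_{\mathrm{ad}})$ with the additivity $\FPdim(\mathcal{C}_{\mathrm{ad}}) = \sum_{X \in \mathcal{O}(\mathcal{C}_{\mathrm{ad}})} \FPdim(X)^2$ and the fact that $(\mathcal{C}_{\mathrm{ad}})_{\mathrm{pt}}$ is non-trivial, to squeeze out any room for a non-invertible simple. A clean way to package this: if $\FPdim(\mathcal{C}_{\mathrm{ad}}) = p^2$ and some simple $X$ is non-invertible, then $\FPdim(X) = p$, so $X \otimes X^*$ already has dimension $p^2 = \FPdim(\mathcal{C}_{\mathrm{ad}})$, forcing $\mathcal{C}_{\mathrm{ad}}$ to be generated by $X \otimes X^*$ with $(\mathcal{C}_{\mathrm{ad}})_{\mathrm{pt}} = G[X]$ of order $p$; but then counting simples in $\mathcal{C}_{\mathrm{ad}}$ via $p^2 = |G[X]| + \sum_{Y \text{ non-inv}} N_{XX^*}^Y \FPdim(Y) \cdot \FPdim(Y)$ — more simply, $\mathcal{C}_{\mathrm{ad}}$ being a $p^2$-dimensional nilpotent category with a non-invertible object is exactly the situation excluded in the proof of Proposition~\ref{p2} (there stated for modular categories, but the purely dimension-counting part — "either $p^2$ invertibles or one object of dimension $p$" applied to $\mathcal{C}_{\mathrm{ad}}$ itself, whose own universal grading must be trivial since $(\mathcal{C}_{\mathrm{ad}})_{\mathrm{ad}} \subsetneq \mathcal{C}_{\mathrm{ad}}$ would contradict... ), so I would instead directly invoke that a nilpotent fusion category of dimension $p^2$ is pointed, which is the relevant classification input. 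Thus the cleanest route is: reduce to $\FPdim(\mathcal{C}_{\mathrm{ad}}) \leq p^2$, and cite that fusion categories of dimension $p$ or $p^2$ are pointed (the latter being well known, e.g. via nilpotency).
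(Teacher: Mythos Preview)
Your proposal is correct and, once the exploratory detours are stripped away, it is exactly the paper's argument: nilpotency gives $\FPdim(\mathcal{C}_{\mathrm{ad}})\in\{1,p,p^2\}$, the divisibility $\FPdim(X)^2\mid\FPdim(\mathcal{C}_{\mathrm{ad}})$ from \cite[Corollary~5.3]{GN} forces $\FPdim(X)\in\{1,p\}$ for every simple, and then the dimension count $\FPdim(\mathcal{C}_{\mathrm{ad}})=a+bp^2$ with $a\geq 1$ kills any non-invertible simple. The paper does this in three lines; your passages through solvability of $\mathcal{C}_{\mathrm{ad}}$, the structure of $G[X]$, the analogy with Proposition~\ref{p2}, and the suggestion to cite a classification of $p^2$-dimensional fusion categories are all unnecessary.
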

\begin{proof}
As $\mathcal{C}$ is nilpotent, $\FPdim(\mathcal{C}_{\mathrm{ad}})=1, p$ or $p^2$. If $X$ is a simple object in $\mathcal{C}_{\mathrm{ad}}$, then $\FPdim(X)$ can be either 1 or $p$ \cite[Corollary 5.3]{GN}. By a dimension argument, $\mathcal{C}_{\mathrm{ad}}$ must be pointed.
\end{proof}

\begin{lemma}\label{Cad2}
Let $\mathcal C$ be an integral MTC and $p$ an odd prime number such that $\FPdim(\mathcal{C}_{\mathrm{ad}})= p^3$. Then one of the following is true:
\begin{enumerate}
    \item $\mathcal{C}_{\mathrm{ad}}$ is pointed and  $\mathcal{G}(\mathcal{C}_{\mathrm{ad}})\simeq \mathbb{Z}_p \times \mathbb{Z}_p \times \mathbb{Z}_p.$
    \item  $(\mathcal{C}_{\mathrm{ad}})_{\mathrm{ad}}=(\mathcal{C}_{\mathrm{ad}})_{\mathrm{pt}}$ and   $\mathcal{G}(\mathcal{C}_{\mathrm{ad}})\simeq \mathbb{Z}_{p}\times \mathbb{Z}_p$.
\end{enumerate}
\end{lemma}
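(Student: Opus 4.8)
The plan is to analyze the structure of $\mathcal{C}_{\mathrm{ad}}$ as an integral fusion category of Frobenius-Perron dimension $p^3$ by combining Lemma~\ref{Cad} with the dichotomy from Proposition~\ref{p2}. First I would set $\mathcal{B} := \mathcal{C}_{\mathrm{ad}}$ and note that $\mathcal{B}$ is a braided (indeed, by modularity of $\mathcal{C}$, a pre-modular) fusion category, since it is a fusion subcategory of $\mathcal{C}$; it is nilpotent because it has prime-power dimension. Applying Lemma~\ref{Cad} to $\mathcal{B}$ shows that $(\mathcal{C}_{\mathrm{ad}})_{\mathrm{ad}} = \mathcal{B}_{\mathrm{ad}}$ is pointed, so $\FPdim(\mathcal{B}_{\mathrm{ad}}) \in \{1, p, p^2\}$, and since $\mathcal{B}$ is nilpotent the universal grading group $\mathcal{U}(\mathcal{B})$ is abelian with $\FPdim(\mathcal{B}) = |\mathcal{U}(\mathcal{B})| \cdot \FPdim(\mathcal{B}_{\mathrm{ad}})$. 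This gives three cases for $\FPdim(\mathcal{B}_{\mathrm{ad}})$: $1$, $p$, or $p^2$.

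Next I would rule out $\FPdim(\mathcal{B}_{\mathrm{ad}}) = p$. Note that $\mathcal{B}$ is itself a modular-ambient subcategory: more precisely, since $p$ divides $\FPdim(\mathcal{C})$, and one should check $p \mid \FPdim(\mathcal{B})$, I can try to apply Proposition~\ref{p2} — but that proposition is about $\FPdim(\mathcal{C}_{\mathrm{ad}})$, not $\FPdim((\mathcal{C}_{\mathrm{ad}})_{\mathrm{ad}})$, so it does not apply directly to $\mathcal{B}_{\mathrm{ad}}$. Instead, if $\FPdim(\mathcal{B}_{\mathrm{ad}}) = p$ then $\mathcal{B}_{\mathrm{ad}}$ is pointed of order $p$, hence cyclic, and $|\mathcal{U}(\mathcal{B})| = p^2$; a dimension count using \cite[Corollary 5.3]{GN} (simples of $\mathcal{B}$ have dimension $1$ or $p$, and $\FPdim(X)^2 \mid \FPdim(\mathcal{B}_{\mathrm{ad}}) = p$ forces all simples of $\mathcal{B}$ to be invertible), so $\mathcal{B}$ would be pointed of dimension $p^3$, whence $\mathcal{B}_{\mathrm{ad}} = \mathcal{B}$, contradicting $\FPdim(\mathcal{B}_{\mathrm{ad}}) = p \neq p^3$. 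The same observation — that $\FPdim(X)^2 \mid \FPdim(\mathcal{B}_{\mathrm{ad}})$ for a nilpotent $\mathcal{B}$ — handles the remaining cases: if $\FPdim(\mathcal{B}_{\mathrm{ad}}) = 1$ then all simples of $\mathcal{B}$ are invertible, so $\mathcal{B}$ is pointed of dimension $p^3$ with $\mathcal{B}_{\mathrm{ad}} = \mathrm{Vec}$; this forces $\mathcal{G}(\mathcal{B})$ to have exponent $p$ (since for a pointed category $\mathcal{B}_{\mathrm{ad}}$ corresponds to the subgroup of $\mathcal{G}(\mathcal{B})$ generated by commutators and by $p$-th... rather, in the braided pointed case $\mathcal{B}_{\mathrm{ad}}$ is trivial iff the associated quadratic form is such that $\mathcal{G}(\mathcal{B})$ carries a nondegenerate-on-the-quotient structure — here I would instead argue directly: $\mathcal{U}(\mathcal{B}) \cong \mathcal{G}(\mathcal{B})/\text{something}$ and $|\mathcal{U}(\mathcal{B})| = p^3$ is elementary abelian because $\mathcal{B}_{\mathrm{ad}}$ being trivial means the grading is "as fine as possible"), yielding case (1): $\mathcal{G}(\mathcal{C}_{\mathrm{ad}}) \cong \mathbb{Z}_p^3$.

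Then for $\FPdim(\mathcal{B}_{\mathrm{ad}}) = p^2$: here $|\mathcal{U}(\mathcal{B})| = p$, and $\mathcal{B}_{\mathrm{ad}}$ is pointed (by Lemma~\ref{Cad} applied at the next level, or directly) of dimension $p^2$, so $\mathcal{G}(\mathcal{B}_{\mathrm{ad}}) \cong \mathbb{Z}_p \times \mathbb{Z}_p$ or $\mathbb{Z}_{p^2}$. To pin down $(\mathcal{C}_{\mathrm{ad}})_{\mathrm{ad}} = (\mathcal{C}_{\mathrm{ad}})_{\mathrm{pt}}$ I would argue that $\mathcal{B}_{\mathrm{ad}} \subseteq \mathcal{B}_{\mathrm{pt}}$ always (since $\mathcal{B}_{\mathrm{ad}}$ is pointed), and conversely since $|\mathcal{U}(\mathcal{B})| = p$, the number of invertibles of $\mathcal{B}$ outside $\mathcal{B}_{\mathrm{ad}}$ is constrained; a dimension count in each nontrivial graded component (each has dimension $p^2$, hence either $p^2$ invertibles or one simple of dimension $p$ by the $a_g + p^2 b_g = p^2$ argument from Proposition~\ref{p2}) shows that $\mathcal{G}(\mathcal{C}_{\mathrm{ad}})$ has order $p^2$ (forcing the "one simple of dimension $p$" option in the nontrivial components) — hence $\mathcal{B}_{\mathrm{pt}} = \mathcal{B}_{\mathrm{ad}}$, giving case (2). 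I should also rule out $\mathcal{G}(\mathcal{C}_{\mathrm{ad}}) \cong \mathbb{Z}_{p^2}$ in case (2): if $\mathcal{G}(\mathcal{B}) = \mathcal{G}(\mathcal{B}_{\mathrm{ad}})$ were cyclic of order $p^2$, then since $\cd(\mathcal{C}) \subseteq \{1\} \cup p\mathbb{Z}$ holds (as $\mathcal{C}$ is integral nilpotent-adjoint with simples of dimension $1$ or $p$... actually I'd need $\cd(\mathcal{C}) \subseteq \{1\}\cup p\mathbb{Z}$ which follows from nilpotency of $\mathcal{C}_{\mathrm{ad}}$ and \cite[Corollary 5.3]{GN}) Corollary~\ref{cyclic 2} would force $\mathcal{G}(\mathcal{C}_{\mathrm{ad}})$ trivial, a contradiction.

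The main obstacle I anticipate is the bookkeeping needed to exclude the cyclic group $\mathbb{Z}_{p^2}$ and to nail down that in case (2) \emph{all} invertibles of $\mathcal{C}_{\mathrm{ad}}$ already lie in $(\mathcal{C}_{\mathrm{ad}})_{\mathrm{ad}}$; this requires carefully combining the graded-component dimension count (the $a_g + p^2 b_g = p^2$ trichotomy), the constraint $\FPdim(X)^2 \mid \FPdim(\mathcal{B}_{\mathrm{ad}})$ for nilpotent $\mathcal{B}$, and Corollary~\ref{cyclic 2}. Everything else is a fairly mechanical case analysis on $\FPdim((\mathcal{C}_{\mathrm{ad}})_{\mathrm{ad}}) \in \{1, p, p^2\}$.
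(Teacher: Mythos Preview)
Your overall case split on $\FPdim((\mathcal{C}_{\mathrm{ad}})_{\mathrm{ad}})\in\{1,p,p^2\}$ is equivalent to the paper's split on whether $\mathcal{C}_{\mathrm{ad}}$ is pointed or not, and your treatment of the case $\FPdim(\mathcal{B}_{\mathrm{ad}})=p^2$ is essentially the paper's argument for case~(2), modulo one misquotation: you invoke Corollary~\ref{cyclic 2}, whose hypothesis is that $\mathcal{G}(\mathcal{C})$ (not $\mathcal{G}(\mathcal{C}_{\mathrm{ad}})$) is cyclic; the correct reference is Proposition~\ref{cyclic}, which directly says $\mathcal{G}(\mathcal{C}_{\mathrm{ad}})$ cannot be a nontrivial cyclic $p$-group. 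Also, in ruling out $\FPdim(\mathcal{B}_{\mathrm{ad}})=p$ you write ``whence $\mathcal{B}_{\mathrm{ad}}=\mathcal{B}$''; for a pointed category $\mathcal{B}_{\mathrm{ad}}=\text{Vec}$, not $\mathcal{B}$, though the contradiction survives.

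The genuine gap is case~(1). Once $\mathcal{B}=\mathcal{C}_{\mathrm{ad}}$ is pointed, the condition $\mathcal{B}_{\mathrm{ad}}=\text{Vec}$ is \emph{automatic} and carries no information about $\mathcal{G}(\mathcal{B})$: every pointed fusion category has trivial adjoint, regardless of whether the underlying group is $\mathbb{Z}_p^3$, $\mathbb{Z}_{p^2}\times\mathbb{Z}_p$, or $\mathbb{Z}_{p^3}$. So your ``exponent $p$'' heuristic has no content. Proposition~\ref{cyclic} rules out $\mathbb{Z}_{p^3}$, but you still need to exclude $\mathbb{Z}_{p^2}\times\mathbb{Z}_p$, and nothing in your outline addresses this. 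The paper's argument here is the key step you are missing: for any non-invertible simple $X$ in $\mathcal{C}$ one has $X\otimes X^*=\bigoplus_{g\in G[X]}g$ (since $\mathcal{C}_{\mathrm{ad}}$ is pointed), so $|G[X]|=\FPdim(X)^2=p^2$; but every subgroup of order $p^2$ in $\mathbb{Z}_{p^2}\times\mathbb{Z}_p$ contains the unique subgroup of order $p$ inside the $\mathbb{Z}_{p^2}$ factor, forcing $\mathbf{G}=\bigcap_X G[X]$ to be nontrivial, which contradicts Corollary~\ref{intersection}. This $G[X]$--intersection argument is the missing idea.
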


\begin{proof}
First note that the dimensions of simple objects of $\mathcal{C}$ are either $1$ or $p$ by \cite[Corollary 5.3]{GN}.

Assume $\mathcal{C}_{\mathrm{ad}}$ is not pointed. Then $\mathcal{C}_{\mathrm{ad}}$ has at least one simple object of dimension $p$. On the other hand, Lemma \ref{Cad} implies that $(\mathcal{C}_{\mathrm{ad}})_{\mathrm{ad}}$ must be pointed, and $\FPdim(X)^2$ divides $\FPdim((\mathcal{C}_{\mathrm{ad}})_{\mathrm{ad}})$ for all simple $X \in \mathcal{C}_{\mathrm{ad}}$ \cite[Corollary 5.3]{GN}. Therefore $\FPdim((\mathcal{C}_{\mathrm{ad}})_{\mathrm{ad}})=p^2$. Hence, by Proposition \ref{cyclic} we get $\mathcal{G}(\mathcal{C}_{\mathrm{ad}})\simeq\mathbb{Z}_{p}\times \mathbb{Z}_p$.

Now if $\mathcal{C}_{\mathrm{ad}}$ is pointed then $\mathcal{G}(\mathcal{C}_{\mathrm{ad}}) \simeq \mathbb{Z}_p \times \mathbb{Z}_p \times \mathbb{Z}_p$ or $\mathbb{Z}_{p^2} \times \mathbb{Z}_p$ by Proposition \ref{cyclic}. Assume the latter holds. Let $X$ be a simple object in $\mathcal{C}$ of dimension $p$. Since $X\otimes X^* \in \mathcal{C}_{\mathrm{ad}}$, then $X\otimes X^*= \bigoplus\limits_{g\in G[X]} \ g$. Hence, $|G[X]|=p^2$ for every non-invertible simple $X$. As the intersection of all subgroups of order $p^2$ of $\mathbb{Z}_{p^2}\times \mathbb{Z}_p$ is non-trivial, this is a contradiction by Proposition \ref{G}.
\end{proof}

\subsection{Weakly group-theoretical MTCs}

Recall that there is a major conjecture that states that every weakly integral fusion category is weakly group-theoretical~\cite[Question 2]{ENO2}.
The next proposition mimics the argument in the proof of Theorem 8.2 in \cite{N1}, which shows that odd-dimensional non-degenerate fusion categories with Frobenius-Perron dimension less than 33075 are solvable, and thus weakly group-theoretical. This bound follows from the result in \cite{N1} that MTCs with Frobenius-Perron dimension $p^aq^bc$, where $a$ and $b$ are nonnegative integers and $c$ is a square-free natural number, are also weakly
group-theoretical (this is a generalization of Bursnide's theorem for fusion categories, see \cite{ENO2}). We show another class of weakly integral categories that also turn out to be weakly group-theoretical, and further restricts the search for a (non-degenerate) counterexample to the conjecture.

\begin{pro}
Let $\mathcal C$ be a MTC of dimension $p^2q^2r^2$, where $p, q$ and $r$ are distinct odd prime numbers. Then $\mathcal C$ is weakly group-theoretical. 
\end{pro}

\begin{proof}
By the proof of \cite[Lemma 9.3]{ENO2} $\mathcal C$ contains a nontrivial symmetric subcategory $\mathcal D$. Since $\mathcal C$ is odd-dimensional then $\mathcal D$ is Tannakian and thus $\mathcal D \simeq \text{Rep}(G)$ for some finite group $G$. Since $\mathcal C$ is non-degenerate, so is its core $\mathcal C_G^0$ \cite{ENO1}. Moreover, $\dim(\mathcal C_G^0)=\dim(\mathcal C)/|G|^2$. Thus by \cite[Theorem 7.4]{N1} $\mathcal C_G^0$ is weakly group theoretical, and hence so is $\mathcal C.$
\end{proof}


\section{Perfect MTCs}\label{section: perfect} 

In analogy with group theory, we call a fusion category \emph{perfect} if the only $1$-dimensional simple object is the unit, that is, if $\mathcal{G(C)}= \{\textbf{1}\}$. Perfect fusion categories are sometimes called unpointed. Perfect fusion rings have been studied in \cite{LPW} and their character table in \cite[Theorem 41]{Bu}. Note that perfect fusion categories and pointed fusion categories are on opposite ends of the spectrum: in the former, the only pointed simple object is the unit, i.e. $\mathcal C_{\mathrm{pt}}=\text{Vec}$, and in the latter all simple objects are invertible, i.e. $\mathcal C_{\mathrm {pt}}=\mathcal C.$

\begin{pro}\label{nosymmetric}
Let $\mathcal{C}$ be a perfect odd-dimensional braided fusion category. Then $\mathcal{C}$ has no non-trivial symmetric subcategories. In particular, $\mathcal C$ is modular. 
\end{pro}

\begin{proof}
Let $\mathcal{E}$ be a symmetric subcategory of $\mathcal{C}$. As $\mathcal{C}$ is odd-dimensional, $\mathcal{E}$ is Tannakian and thus   $\mathcal{E} \simeq \Rep(G)$ for some finite group $G$. Note that the unit is the only invertible object in $\mathcal{C}$, hence $G$ must be a perfect group. Thus, $G$ must be trivial, as the order of every non-trivial finite perfect group is divisible by four and $\mathcal{C}$ is odd-dimensional. This implies that $\mathcal{E}$ is trivial.
\end{proof}

\begin{cor}\label{subcategoriesaremodular}
Let $\mathcal{C}$ be a perfect odd-dimensional MTC. Then every fusion subcategory of $\mathcal{C}$ is modular.  In particular,  $$\mathcal C \simeq \mathcal C\langle X \rangle \boxtimes \mathcal C\langle X \rangle'$$ for all $X\in \mathcal{O(C)}.$
\end{cor}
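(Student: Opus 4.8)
The plan is to combine Lemma~\ref{nosymmetric} with the characterization of modularity of a subcategory via its centralizer, and then apply Müger's decomposition theorem. So first I would let $\mathcal{K} \subseteq \mathcal{C}$ be an arbitrary fusion subcategory. Since $\mathcal{C}$ is modular and $\mathcal{K} \cap \mathcal{K}'$ is a fusion subcategory of $\mathcal{C}$, it inherits a braiding, and (as noted in the preliminaries) $\mathcal{K} \cap \mathcal{K}'$ is symmetric because $\mathcal{K} \cap \mathcal{K}' \subseteq (\mathcal{K} \cap \mathcal{K}')'$: indeed any object centralizing all of $\mathcal{K}$ in particular centralizes everything in $\mathcal{K} \cap \mathcal{K}' \subseteq \mathcal{K}$. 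Now $\mathcal{K} \cap \mathcal{K}'$ is a braided fusion subcategory of $\mathcal{C}$, hence is itself odd-dimensional (its dimension divides $\dim(\mathcal{C})$ by Müger) and perfect (its group of invertibles is a subgroup of $\mathcal{G}(\mathcal{C}) = \{\mathbf{1}\}$). By Lemma~\ref{nosymmetric} applied to $\mathcal{K} \cap \mathcal{K}'$ — which has no non-trivial symmetric subcategories, and is itself symmetric — we conclude $\mathcal{K} \cap \mathcal{K}' = \mathrm{Vec}$.

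Next I would invoke the criterion recalled in the preliminaries: a fusion subcategory $\mathcal{K}$ of a modular category $\mathcal{C}$ is modular if and only if $\mathcal{K} \cap \mathcal{K}' = \mathrm{Vec}$, and in that case $\mathcal{K}'$ is also modular and $\mathcal{C} \simeq \mathcal{K} \boxtimes \mathcal{K}'$ as braided fusion categories \cite{Mu}. Having just shown $\mathcal{K} \cap \mathcal{K}' = \mathrm{Vec}$ for every fusion subcategory $\mathcal{K}$, this immediately gives that every fusion subcategory of $\mathcal{C}$ is modular. For the ``in particular'' statement, I would take $\mathcal{K} = \mathcal{C}[X]$, the fusion subcategory generated by a simple object $X \in \mathcal{O(C)}$ (this is the notation from \cite{Mu} for the subcategory generated by $X$), apply the above with $\mathcal{K}' = \mathcal{C}[X]'$, and read off $\mathcal{C} \simeq \mathcal{C}[X] \boxtimes \mathcal{C}[X]'$.

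I do not anticipate a serious obstacle here; the only point requiring care is making sure Lemma~\ref{nosymmetric} genuinely applies to $\mathcal{K} \cap \mathcal{K}'$. The subtlety is that $\mathcal{K} \cap \mathcal{K}'$ must inherit both the odd-dimensionality and the perfectness from $\mathcal{C}$. Odd-dimensionality follows because $\dim(\mathcal{K} \cap \mathcal{K}')$ divides $\dim(\mathcal{C})$ (an integer dimension, since $\mathcal{C}$ is odd-dimensional hence integral by \cite[Theorem 2.2]{HR}/\cite{GN}), and perfectness follows because $\mathcal{G}(\mathcal{K} \cap \mathcal{K}') \subseteq \mathcal{G}(\mathcal{C}) = \{\mathbf{1}\}$. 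Once these two facts are in place the argument is a direct chain of citations, so the ``hard part'' is really just the bookkeeping of checking that the hypotheses propagate down to the subcategory.
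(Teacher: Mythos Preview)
Your proof is correct and follows essentially the same route as the paper: show $\mathcal{K}\cap\mathcal{K}'$ is trivial via Lemma~\ref{nosymmetric}, then invoke the modularity criterion and M\"uger's splitting. The only difference is cosmetic: the paper applies Lemma~\ref{nosymmetric} directly to $\mathcal{C}$ (so $\mathcal{K}\cap\mathcal{K}'$, being a symmetric subcategory of $\mathcal{C}$, is immediately trivial), whereas you apply it to $\mathcal{K}\cap\mathcal{K}'$ itself --- correct, but this forces the extra bookkeeping of checking that the subcategory inherits perfectness and odd-dimensionality, which is avoidable.
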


\begin{proof}
Let $\mathcal{D}$ be a non-trivial fusion subcategory of $\mathcal{C}$, and consider its M\"{u}ger centralizer $\mathcal{D}'$. Note that $\mathcal{D} \cap \mathcal{D}'$ is a symmetric subcategory of $\mathcal{C}$, and thus by Proposition \ref{nosymmetric} it must be trivial. That is, $\mathcal{D} \cap \mathcal{D}' \simeq \text{Vec}$, and therefore $\mathcal{D}$ is modular by \cite[Section 2.5]{DGNO1}.
\end{proof}
\begin{remark}
From Corollary \ref{subcategoriesaremodular}, it follows that it is enough to understand \emph{simple} perfect odd-dimensional MTCs, since any perfect odd-dimensional MTC is a Deligne product of simple ones. 
\end{remark}
\begin{cor}\label{perfect: nosubcategories}
Let $\mathcal{C}$ be a perfect odd-dimensional MTC of prime rank. Then $\mathcal{C}$ has no non-trivial fusion subcategories.
\end{cor}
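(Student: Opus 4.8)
The plan is to combine the structural results obtained so far about perfect odd-dimensional modular tensor categories with the numerical rigidity coming from having prime rank. Let $\mathcal{C}$ be a perfect odd-dimensional modular tensor category whose rank $r$ is prime, and suppose toward a contradiction that $\mathcal{D}$ is a non-trivial proper fusion subcategory of $\mathcal{C}$. By Corollary \ref{subcategoriesaremodular}, $\mathcal{D}$ is itself modular, and moreover $\mathcal{C}\simeq \mathcal{D}\boxtimes \mathcal{D}'$ as braided fusion categories, where $\mathcal{D}'$ is also modular and non-trivial (since $\mathcal{D}$ is proper). The rank of a Deligne product is the product of the ranks, so $r=\rank(\mathcal{C})=\rank(\mathcal{D})\cdot\rank(\mathcal{D}')$ with both factors at least $2$. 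This contradicts the primality of $r$, and we are done.

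First I would recall precisely why $\mathcal{C}\simeq\mathcal{D}\boxtimes\mathcal{D}'$: this is exactly the content of Corollary \ref{subcategoriesaremodular} applied with the observation that a modular subcategory $\mathcal{D}$ of a modular category $\mathcal{C}$ satisfies $\mathcal{D}\cap\mathcal{D}'\simeq\text{Vec}$ and hence $\mathcal{C}\simeq\mathcal{D}\boxtimes\mathcal{D}'$ by M\"uger's theorem (already cited in the Preliminaries). Then I would note that $\mathcal{D}$ non-trivial forces $\rank(\mathcal{D})\geq 2$, and $\mathcal{D}$ proper forces $\mathcal{D}'$ non-trivial, hence $\rank(\mathcal{D}')\geq 2$; one should double-check $\mathcal{D}'\ne\text{Vec}$, which holds because $\mathcal{D}'=\text{Vec}$ would give $\mathcal{C}\simeq\mathcal{D}$, contradicting that $\mathcal{D}$ is proper. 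The final step is the elementary fact that $\rank(\mathcal{A}\boxtimes\mathcal{B})=\rank(\mathcal{A})\rank(\mathcal{B})$, since the simple objects of $\mathcal{A}\boxtimes\mathcal{B}$ are precisely the $X\boxtimes Y$ with $X\in\mathcal{O}(\mathcal{A})$, $Y\in\mathcal{O}(\mathcal{B})$.

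There is essentially no obstacle here: the statement is a quick corollary, and all the work has been front-loaded into Lemma \ref{nosymmetric} and Corollary \ref{subcategoriesaremodular}. The only point requiring a modicum of care is making sure both tensor factors are genuinely non-trivial so that $r$ is written as a product of two integers $\geq 2$; this is where one uses both that $\mathcal{D}$ is non-trivial and that it is \emph{proper}. I would present the argument in three short sentences and not belabor the rank multiplicativity of $\boxtimes$.
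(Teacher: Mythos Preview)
Your proof is correct and is exactly the intended argument: apply Corollary~\ref{subcategoriesaremodular} to get $\mathcal{C}\simeq\mathcal{D}\boxtimes\mathcal{D}'$, then use multiplicativity of rank under $\boxtimes$ together with primality. The paper leaves this corollary without an explicit proof, treating it as immediate from Corollary~\ref{subcategoriesaremodular}, and your write-up fills in precisely those routine details.
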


\begin{pro}\label{coprime F-P}
Let $\mathcal{C}$ be an odd-dimensional perfect MTC 
and let $X, Y$ be simple objects in $\mathcal C$ with coprime Frobenius-Perron dimensions. Then 
  \begin{enumerate}[leftmargin=*, label=(\alph*)]
  \item \label{Sxy=0} Either $s_{X, Y} = 0$ or $s_{X, Y} = d_Xd_Y$. Moreover, $s_{X, Y} \not = 0$ if and only if $X$ and $Y$ centralize each other.
  \item\label{C prime then Sxy=0} If $\mathcal{C}$ is prime then $s_{X, Y} = 0$.
\end{enumerate}
\end{pro}
\begin{proof}
\ref{Sxy=0} Fix $X$ a simple object in $\mathcal C$. Consider the full subcategory $\mathcal{D}_{\lambda}^X$ of objects
$Z \in \mathcal C$ such that $\sigma_{Z,X}\sigma_{X,Z} = \lambda \cdot \id_{X\otimes Z}$ as in \cite[Lemma 3.15]{DGNO1}. Moreover, the category $\mathcal{D}^X = \oplus_{\lambda\in \textbf{k}^*} \mathcal{D}_{\lambda}^X$ is a fusion subcategory of $\mathcal{C}$. By \cite[Proposition 3.22]{DGNO1}, $\mathcal{D}^X = \langle Y\in \mathcal{O(C)} | Y \text{centralizes } X\otimes X^*\rangle = C_{\mathcal C}(\mathcal{C}\langle X \rangle_{\text{ad}})$.
By Corollary \ref{subcategoriesaremodular}, $\mathcal{C}\langle X \rangle$ is modular and, since $\mathcal{C}$ is perfect, so is $\mathcal{C}\langle X \rangle$. Therefore, $\mathcal{C}\langle X \rangle_{\text{ad}} = \mathcal{C}\langle X \rangle$ and $\mathcal{D}^X = C_{\mathcal C}((\mathcal{C}\langle X \rangle)_{\text{ad}}) = \mathcal{C}\langle X \rangle'$, by \cite[Corollary 6.9]{GN}. 

It follows from \cite[Lemma 7.1]{ENO2} that, since $\mathcal{D}^X = \mathcal{C}\langle X \rangle'$, for all $X$, $Y\in \mathcal{O(C)}$ of coprime Frobenius-Perron dimensions we have that $s_{X, Y} = 0$ or $s_{X, Y} = d_Xd_Y$ as desired.

\ref{C prime then Sxy=0} Consider the fusion subcategory $\mathcal C \langle X\rangle$. By Corollary \ref{subcategoriesaremodular}, we have a factorization $\mathcal C = \mathcal C \langle X \rangle \boxtimes \mathcal C \langle X\rangle '$. Since $\mathcal C$ is prime, we must have that $\mathcal C\langle X\rangle'$ is trivial, and thus $Y$ does not centralize $X$. Hence by part \ref{Sxy=0} we conclude $s_{X,Y}=0.$
\end{proof}
\begin{remark}
Part \ref{C prime then Sxy=0} of Proposition \ref{coprime F-P}  extends \cite[Lemma 10.2]{NPa} when the category is prime to the case $\gcd(\FPdim(X), \FPdim(Y)) = 2$. 
\end{remark}

We finish this section with the following result, which when applied to weakly integral perfect MTC shows that the Frobenius-Perron dimensions of all the simple objects in the category cannot have a common divisor. 

\begin{lemma}\label{invertible_square free}
Let $\mathcal{C}$ be a weakly-integral MTC such that $|\mathcal{G(C)}|$ is square-free. Then  $$\gcd\{\FPdim(X)  \ | X\in \mathcal{O(C)}, \  X\not\in \mathcal{G(C)} \}=1.$$
\end{lemma}

\begin{proof}
Assume there exists a prime $p$ such that $p$ divides $\dim(X)$ for all non-invertible simple $X$ in $\mathcal{C}$. By \cite[Theorem 2.11]{ENO2} we have that $p^2$ divides $\dim(\mathcal{C})$. Note that 
\begin{equation*}
    \FPdim(\mathcal{C})=|\mathcal{G(C)}| + \sum\limits_{X \in \mathcal{O(C)}\setminus \mathcal{G(C)}} \FPdim(X)^2,
\end{equation*}
and thus $p^2$ divides $|\mathcal{G(C)}|$ which is a square free number, a contradiction.
\end{proof}

\section{Results for classification of MTCs by rank}\label{section: ranks}

In this section we give lower bounds for the rank of a modular category in terms of data associated to its universal grading. We also use the action of $\mathcal{G(C)}$ on the graded components to gain information about their rank. We use this results on Section \ref{section: mnsd low rank} to prove our main Theorem.

\begin{lemma}\label{rank1}
Let $\mathcal{C}$ be a MTC and consider the universal grading $\mathcal{C}=\bigoplus\limits_{g\in \mathcal{U(C)}} \mathcal{C}_g.$ For every odd prime $p$ that divides $| \mathcal{G}(\mathcal{C}_{\mathrm{ad}})|$ we have the following:
\begin{enumerate}[leftmargin=*, label=(\alph*)]
    \item \label{item:rank-same-dimension} There exists $h\in \mathcal{U(C)}$, $h\ne 1$, such that $\mathcal{C}_h$ has at least $p$ non-invertible simple objects of the same dimension. 
    \item \label{item:rank-non-pointed} If $\mathcal{C}$ is non-pointed then  $$\rank(\mathcal{C}) \geq \rank(\mathcal{C}_{\mathrm{ad}}) + |\mathcal{G(C)}| + p -2.$$ 
   \item \label{item:rank-odd-dim} If $\mathcal{C}$ is odd-dimensional then
$$\rank(\mathcal{C}) \geq \rank(\mathcal{C}_{\mathrm{ad}}) + |\mathcal{G}(\mathcal C)| + 2p - 3 .$$ 
\end{enumerate}
\end{lemma}

\begin{proof}
Let $p$ be an odd prime that divides $|\mathcal{G}(\mathcal{C}_{\mathrm{ad}})|$.

\ref{item:rank-same-dimension} Let $g \in \mathcal{G}(\mathcal{C}_{\mathrm{ad}})$ of order $p$. Consider the fusion subcategory $\mathcal{C}\langle g\rangle$ generated by $g$. Note that $\mathcal{C}\langle g\rangle\subseteq (\mathcal C_{\mathrm{ad}})_{\mathrm{pt}}$ and thus $\mathcal{C}\langle g\rangle$ is a symmetric subcategory of $\mathcal{C}$ (see Remark \ref{Cad_pt_symm}). Since $\FPdim(\mathcal{C}\langle g\rangle)=p$ is odd then $\theta_g=1$ \cite[Corollary 2.7]{DGNO1}. 

For all $h\in \mathcal{G(C)}\setminus\{e\}$ consider the action of $g$ on the non-invertible simple objects of $\mathcal C_h$ given by left multiplication.  As the order of $g$ is $p$ it follows that for all $h\in \mathcal{G(C)}\setminus\{e\}$ this action is given either by the identity or by a cycle of length $p$.  If the former holds for all $h\in \mathcal{G(C)}\setminus\{e\}$, by Lemma \ref{gX=X} the rows of the $S$- matrix corresponding to $g$ and $\textbf{1}$ are equal, which is a contradiction as S is invertible. Thus, there must exist $h\ne 1$ such that $g$ acts as a cycle of length $p$ on the non-invertible simple objects of $\mathcal C_h$. Therefore there are at least $p$ different non-invertible simple objects in $\mathcal{C}_h$ of the same dimension. 

\ref{item:rank-non-pointed} Recall that all the components of the universal grading have at least one simple element. Hence $\rank(\mathcal{C}) \geq \rank(\mathcal{C}_{\mathrm{ad}}) + |\mathcal{U(C)}| + p -2 = \rank(\mathcal{C}_{\mathrm{ad}}) + |\mathcal{G(C)}| + p -2$.

\ref{item:rank-odd-dim} By part \ref{item:rank-same-dimension} there exists $h\ne \textbf 1$ such that $\mathcal{C}_h$ has at least $p$ non invertible simple objects. As $\mathcal{C}$ is odd-dimensional, by \cite[Corollary 8.2(ii)]{NS}, it is maximally non-self-dual. Thus, $\mathcal{C}_{h^{-1}}=\mathcal C_h^*$ also has at least $p$ non invertible simple objects, and the result follows.
\end{proof}

\begin{lemma}\label{rank 4}
Let $\mathcal{C}$ be a MTC such that $(\mathcal{C}_{\mathrm{ad}})_{\mathrm{pt}}$ is trivial. Then $\rank(\mathcal{C})= \rank(\mathcal{C}_{\mathrm{ad}})  \rank(\mathcal{C}_\mathrm{pt})$. 
\end{lemma}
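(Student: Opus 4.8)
The plan is to exploit the fact that $(\mathcal{C}_{\mathrm{ad}})_{\mathrm{pt}}$ being trivial forces $\mathcal{C}_{\mathrm{ad}}$ and $\mathcal{C}_{\mathrm{pt}}$ to centralize each other and intersect in $\mathrm{Vec}$, so that $\mathcal{C}$ factors as their Deligne product. First I would recall from the preliminaries that in a modular category $\mathcal{C}_{\mathrm{pt}} = \mathcal{C}_{\mathrm{ad}}'$ and $\mathcal{C}_{\mathrm{pt}}' = \mathcal{C}_{\mathrm{ad}}$, so $\mathcal{C}_{\mathrm{ad}} \cap \mathcal{C}_{\mathrm{pt}} = \mathcal{C}_{\mathrm{ad}} \cap \mathcal{C}_{\mathrm{ad}}'$; this intersection is a symmetric subcategory of $\mathcal{C}_{\mathrm{ad}}$, hence in particular a pointed fusion subcategory of $\mathcal{C}_{\mathrm{ad}}$, i.e. it is contained in $(\mathcal{C}_{\mathrm{ad}})_{\mathrm{pt}}$, which is trivial by hypothesis. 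Therefore $\mathcal{C}_{\mathrm{ad}} \cap \mathcal{C}_{\mathrm{ad}}' = \mathrm{Vec}$, so $\mathcal{C}_{\mathrm{ad}}$ is a modular subcategory of $\mathcal{C}$, and by Müger's theorem (quoted in the excerpt) $\mathcal{C} \simeq \mathcal{C}_{\mathrm{ad}} \boxtimes \mathcal{C}_{\mathrm{ad}}'$ as braided fusion categories, i.e. $\mathcal{C} \simeq \mathcal{C}_{\mathrm{ad}} \boxtimes \mathcal{C}_{\mathrm{pt}}$.

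Next I would pass to ranks. For a Deligne product of fusion categories $\mathcal{A} \boxtimes \mathcal{B}$ the simple objects are exactly $X \boxtimes Y$ with $X \in \mathcal{O}(\mathcal{A})$, $Y \in \mathcal{O}(\mathcal{B})$, so $\rank(\mathcal{A} \boxtimes \mathcal{B}) = \rank(\mathcal{A})\rank(\mathcal{B})$. Applying this to the factorization above gives $\rank(\mathcal{C}) = \rank(\mathcal{C}_{\mathrm{ad}})\rank(\mathcal{C}_{\mathrm{pt}})$, which is the claim. One subtlety worth spelling out: I should confirm that the subcategory of $\mathcal{C}$ corresponding to $\mathcal{C}_{\mathrm{pt}}$ under the equivalence $\mathcal{C} \simeq \mathcal{C}_{\mathrm{ad}} \boxtimes \mathcal{C}_{\mathrm{ad}}'$ really is the $\mathrm{pt}$-part, but this is immediate since $\mathcal{C}_{\mathrm{ad}}' = \mathcal{C}_{\mathrm{pt}}$ and the equivalence identifies $\mathcal{C}_{\mathrm{ad}}'$ with $\mathrm{Vec} \boxtimes \mathcal{C}_{\mathrm{ad}}'$.

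The main (and really only) obstacle is making sure the symmetric-subcategory step is airtight: that $\mathcal{C}_{\mathrm{ad}} \cap \mathcal{C}_{\mathrm{ad}}'$ is genuinely pointed so that it lands inside $(\mathcal{C}_{\mathrm{ad}})_{\mathrm{pt}}$. A symmetric fusion category need not be pointed in general (e.g. $\Rep(G)$), so I cannot simply say "symmetric implies pointed". The fix is that $\mathcal{C}_{\mathrm{ad}} \cap \mathcal{C}_{\mathrm{ad}}'$ is contained in $\mathcal{C}_{\mathrm{pt}}$ (because $\mathcal{C}_{\mathrm{ad}}' = \mathcal{C}_{\mathrm{pt}}$ in the modular case), hence it is pointed for that reason, and it is also a subcategory of $\mathcal{C}_{\mathrm{ad}}$; a pointed subcategory of $\mathcal{C}_{\mathrm{ad}}$ is by definition contained in $(\mathcal{C}_{\mathrm{ad}})_{\mathrm{pt}}$. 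So the argument should be organized as: $\mathcal{C}_{\mathrm{ad}} \cap \mathcal{C}_{\mathrm{pt}} \subseteq (\mathcal{C}_{\mathrm{ad}})_{\mathrm{pt}} = \mathrm{Vec}$, hence $\mathcal{C}_{\mathrm{ad}}$ is modular, hence $\mathcal{C} \simeq \mathcal{C}_{\mathrm{ad}} \boxtimes \mathcal{C}_{\mathrm{pt}}$, hence the rank identity. Everything else is routine.
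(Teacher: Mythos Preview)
Your proposal is correct and follows essentially the same approach as the paper: using $\mathcal{C}_{\mathrm{ad}}' = \mathcal{C}_{\mathrm{pt}}$ to identify $\mathcal{C}_{\mathrm{ad}} \cap \mathcal{C}_{\mathrm{ad}}'$ with $(\mathcal{C}_{\mathrm{ad}})_{\mathrm{pt}} = \mathrm{Vec}$, concluding that $\mathcal{C}_{\mathrm{ad}}$ is modular, and then invoking M\"uger's factorization $\mathcal{C} \simeq \mathcal{C}_{\mathrm{ad}} \boxtimes \mathcal{C}_{\mathrm{pt}}$ to obtain the rank identity. Your extra care in justifying that $\mathcal{C}_{\mathrm{ad}} \cap \mathcal{C}_{\mathrm{pt}} \subseteq (\mathcal{C}_{\mathrm{ad}})_{\mathrm{pt}}$ via pointedness (rather than via symmetry) is exactly the right fix for the subtlety you flagged, and in fact this containment is an equality.
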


\begin{proof}
Note that $\mathcal{C}_{\mathrm{ad}} \cap \mathcal{C}_{\mathrm{ad}}' \simeq \mathcal{C}_{\mathrm{ad}} \cap (\mathcal{C}_{\mathrm{ad}})_{\mathrm{pt}} \simeq \text{Vec}$. Thus $\mathcal{C}_{\mathrm{ad}}$ is modular and we have a braided equivalence $\mathcal{C} \simeq \mathcal{C}_{\mathrm{ad}} \boxtimes \mathcal{C}_\mathrm{pt}$. Therefore $\rank(\mathcal{C})= \rank(\mathcal{C}_{\mathrm{ad}})  \rank(\mathcal{C}_\mathrm{pt})$. 
\end{proof}

\begin{cor}\label{primerank}
Let $\mathcal{C}$ be a MTC of prime rank such that $ (\mathcal C_{\mathrm{ad}})_{\mathrm{pt}}$ is trivial. Then either $\mathcal{C}$ is pointed or $\mathcal{C}_\mathrm{pt}$ is trivial.\end{cor}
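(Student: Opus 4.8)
The plan is to apply Lemma \ref{rank 4} directly and then exploit the primality of the rank. First I would note that since $(\mathcal{C}_{\mathrm{ad}})_{\mathrm{pt}}$ is trivial, Lemma \ref{rank 4} gives $\rank(\mathcal{C}) = \rank(\mathcal{C}_{\mathrm{ad}}) \cdot \rank(\mathcal{C}_\mathrm{pt})$. Since $\rank(\mathcal{C})$ is prime, one of the two factors must equal $1$. A fusion category has rank $1$ precisely when it is equivalent to $\operatorname{Vec}$, so either $\mathcal{C}_{\mathrm{ad}} \simeq \operatorname{Vec}$ or $\mathcal{C}_\mathrm{pt} \simeq \operatorname{Vec}$.

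Next I would dispose of the first case. If $\mathcal{C}_{\mathrm{ad}} \simeq \operatorname{Vec}$, then the universal grading has trivial component $\operatorname{Vec}$, so $\mathcal{C}$ is pointed (indeed, for a modular category $\mathcal{C}_{\mathrm{ad}} = \mathcal{C}_\mathrm{pt}'$, and $\mathcal{C}_{\mathrm{ad}}$ trivial forces $\mathcal{C}_\mathrm{pt} = \mathcal{C}$). Alternatively, one can simply recall that $\mathcal{C}_{\mathrm{ad}} = \operatorname{Vec}$ means every $X \otimes X^*$ is a sum of copies of $\mathbf{1}$, which for simple $X$ forces $X$ invertible. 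In the second case, $\mathcal{C}_\mathrm{pt} \simeq \operatorname{Vec}$ says exactly that $\mathcal{C}_\mathrm{pt}$ is trivial, which is the other alternative in the statement. This completes the argument.

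There is essentially no obstacle here: the content is entirely in Lemma \ref{rank 4}, and the corollary is a one-line consequence of the fact that a prime cannot be written as a product of two integers greater than $1$. The only point requiring a word of care is the identification "rank $1$ $\iff$ trivial," but this is immediate from semisimplicity since a fusion category with a single simple object (necessarily $\mathbf{1}$) is $\operatorname{Vec}$.

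\begin{proof}
Since $(\mathcal{C}_{\mathrm{ad}})_{\mathrm{pt}}$ is trivial, Lemma \ref{rank 4} gives
\[
\rank(\mathcal{C}) = \rank(\mathcal{C}_{\mathrm{ad}}) \cdot \rank(\mathcal{C}_\mathrm{pt}).
\]
As $\rank(\mathcal{C})$ is prime, one of the factors on the right must equal $1$. A fusion category has rank $1$ if and only if it is equivalent to $\operatorname{Vec}$. If $\rank(\mathcal{C}_{\mathrm{ad}})=1$, then $\mathcal{C}_{\mathrm{ad}} \simeq \operatorname{Vec}$, so $X \otimes X^*$ is a multiple of $\mathbf{1}$ for every simple $X$, forcing every simple object to be invertible; hence $\mathcal{C}$ is pointed. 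If instead $\rank(\mathcal{C}_\mathrm{pt})=1$, then $\mathcal{C}_\mathrm{pt}$ is trivial.
\end{proof}
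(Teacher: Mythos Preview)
Your proof is correct and is exactly the intended argument: the paper states this as an immediate corollary of Lemma~\ref{rank 4} without proof, and your derivation via the factorization $\rank(\mathcal{C})=\rank(\mathcal{C}_{\mathrm{ad}})\cdot\rank(\mathcal{C}_\mathrm{pt})$ together with primality is precisely what is meant.
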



\begin{lemma} \label{dim cong rank}
Let $\mathcal{C}$ be an odd-dimensional fusion category. Then $\rank(\mathcal{D})\equiv \dim(\mathcal{D})$ $\modulo{8}$ for every fusion subcategory $\mathcal{D}$ of $\mathcal{C}$. 
\end{lemma}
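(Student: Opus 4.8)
The plan is to prove the congruence $\rank(\mathcal D) \equiv \dim(\mathcal D) \pmod 8$ by reducing to the modular case and invoking the well-known Gauss sum / Anderson–Moore–Vafa type congruence. First I would observe that since $\mathcal C$ is odd-dimensional, every fusion subcategory $\mathcal D$ inherits an odd-dimensional braided (indeed premodular) structure, so by Lemma \ref{nosymmetric} applied to $\mathcal D \cap \mathcal D'$—or more precisely by noting that any Tannakian subcategory $\Rep(G)$ of $\mathcal D$ forces $|G|$ odd, hence $\mathcal D$ has a nontrivial symmetric part only through $\Rep(G)$ with $G$ odd—I can run the standard de-equivariantization argument: pass to the core $\mathcal D_G^0$, which is a nondegenerate (modular) category of dimension $\dim(\mathcal D)/|G|^2$, still odd. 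Since $|G|$ is odd, $|G| \equiv \pm 1, \pm 3 \pmod 8$ so $|G|^2 \equiv 1 \pmod 8$; thus $\dim(\mathcal D_G^0) \equiv \dim(\mathcal D) \pmod 8$, and one must also check the ranks agree mod 8 under de-equivariantization, which reduces the whole statement to the case $\mathcal D$ modular.

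For $\mathcal D$ modular and odd-dimensional, the key input is the Gauss sum identity: the multiplicative central charge $\xi = p^+/\sqrt{\dim \mathcal D}$ (where $p^\pm = \sum_{X} \theta_X^{\pm 1} \di_X^2$) is a root of unity, and more usefully, $p^+ p^- = \dim(\mathcal D)$ together with the fact (Ng–Schauenburg, and earlier Bruillard–Ng–Rowell–Wang) that for odd-dimensional modular categories the twists and the central charge satisfy strong 2-adic constraints. Concretely, I would use that an odd-dimensional MTC has order of the $T$-matrix odd, so the anomaly vanishes suitably, and appeal to the congruence $\rank \equiv \dim \pmod 8$ which in this setting follows from counting: $\dim(\mathcal D) = \sum_X \di_X^2$, and since $\mathcal D$ is integral (by Hong–Rowell, odd-dimensional $\Rightarrow$ MNSD $\Rightarrow$ integral) each $\di_X^2$ is the square of an odd integer (all $\di_X$ odd, as $\di_X \mid \dim \mathcal D$ which is odd), hence $\di_X^2 \equiv 1 \pmod 8$. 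Therefore $\dim(\mathcal D) \equiv \#\{X \in \mathcal O(\mathcal D)\} = \rank(\mathcal D) \pmod 8$.

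Actually, upon reflection, the cleanest route avoids de-equivariantization entirely: by \cite[Theorem 2.2]{HR} (quoted in the introduction) an odd-dimensional fusion category—being a fortiori of odd Frobenius–Perron dimension with, one must argue, all simple objects of odd FP-dimension—has $\FPdim(X)$ an odd integer for every simple $X$. The point to justify is that $\FPdim(X)$ is an \emph{integer}: for $\mathcal D$ a fusion subcategory of an odd-dimensional $\mathcal C$, one uses that $\FPdim(\mathcal D) \mid \FPdim(\mathcal C)$ is odd, and—if $\mathcal C$ is assumed throughout to be (pre)modular and pseudo-unitary as in the running hypotheses—integrality of each $\FPdim(X)$ follows because odd-dimensional modular categories are integral. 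Granting this, $\FPdim(X)$ is odd, so $\FPdim(X)^2 \equiv 1 \pmod 8$, and summing over the $\rank(\mathcal D)$ simple objects gives $\dim(\mathcal D) = \FPdim(\mathcal D) = \sum_{X \in \mathcal O(\mathcal D)} \FPdim(X)^2 \equiv \rank(\mathcal D) \pmod 8$, using pseudo-unitarity to identify global and FP dimension.

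The main obstacle I anticipate is justifying that every simple object of an odd-dimensional fusion subcategory has \emph{integer} (hence odd) dimension: this is where one genuinely needs the braided/modular structure and the Hong–Rowell result, since a priori "odd-dimensional" only constrains the total dimension. If the ambient $\mathcal C$ is only assumed odd-dimensional \emph{premodular} (not modular), one should first note that odd-dimensional premodular categories are still integral—$\mathcal C$ embeds in its Drinfeld center or one applies the MNSD criterion componentwise—after which the mod-8 counting argument is immediate and purely arithmetic.
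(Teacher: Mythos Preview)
The paper's proof is exactly your third paragraph, compressed to two lines: each simple in $\mathcal C$ (hence in $\mathcal D$) has odd integer dimension, the square of an odd integer is $1 \pmod 8$, and summing $\dim(X)^2$ over $\mathcal{O(D)}$ gives $\dim(\mathcal D) \equiv \rank(\mathcal D) \pmod 8$.

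Your first two paragraphs are an unnecessary detour. In particular, the de-equivariantization reduction contains a circular step: you note that ``one must also check the ranks agree mod 8 under de-equivariantization,'' but the rank of $\mathcal D_G^0$ bears no simple relation to $\rank(\mathcal D)$, and verifying that mod-8 compatibility would essentially require the lemma itself. The Gauss-sum and $T$-matrix considerations are likewise irrelevant here; no analytic or anomaly-type input is needed.

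The concern you raise in the last paragraph---that one must justify each $\FPdim(X)$ being an \emph{odd integer}---is legitimate, and the paper simply asserts it. Integrality holds because an odd-dimensional fusion category is automatically integral (the integer-dimension simples form the trivial component of an elementary abelian $2$-group grading \cite{GN}, which must be trivial when the total dimension is odd). Oddness of each $\FPdim(X)$ then follows from $\FPdim(X) \mid \FPdim(\mathcal C)$; in every application the paper makes, $\mathcal C$ sits inside an odd-dimensional modular tensor category, where this divisibility is standard. So your instinct that some structural hypothesis is implicitly in play is correct, but the proof itself is purely the elementary mod-8 count, with none of the machinery you initially reach for.
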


\begin{proof}
Any odd integer $n$ satisfies $n^2\equiv 1 \mod 8$. As the dimension of every simple element in $\mathcal{C}$ is an odd integer we get 
\begin{equation*}
    \dim(\mathcal{D})= \sum\limits_{X \in \mathcal{O(D)}} \dim(X)^2 \equiv  \sum\limits_{X \in \mathcal{O(D)}} 1 = \rank(\mathcal{D})  \ \ \ \   \modulo{8}.
\end{equation*}
\end{proof}

\begin{remark}\label{cong 8}
Let $\mathcal{C}$ be an odd-dimensional MTC and consider the universal (faithful) grading $\mathcal{C}=\bigoplus\limits_{g \in \mathcal{U(C)}} \mathcal{C}_g$. As a direct consequence of Lemma ~\ref{dim cong rank}, We have 
\begin{align*}
    \rank(\mathcal{C}_{\mathrm{ad}}) \equiv \rank(\mathcal{C}_g) \modulo{8},
\end{align*}
 for all $g \in \mathcal{U(C)}.$
\end{remark}
\begin{pro}\label{rank Cg}
Let $\mathcal C$ be a non-pointed MTC such that $\mathcal{C}_\mathrm{pt}\subseteq \mathcal C_{\mathrm{ad}}$ and $\FPdim(\mathcal{C}_\mathrm{pt})=p^k$ for some odd prime $p$ and $k\in \mathbb N$. Consider the universal grading $\mathcal C=\bigoplus_{g\in \mathcal{G(C)}} \mathcal C_g$. Then 
\begin{align*}
    \rank(\mathcal C_g)\equiv  0 \modulo{p},
\end{align*}
for all $g\in \mathcal{G(C)}$ such that $g\ne 1$.
\end{pro}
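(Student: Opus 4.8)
The plan is to let $\mathcal{G(C)}$ act on the simple objects of each graded component by left tensor multiplication and to read off $\rank(\mathcal{C}_g)$ as a sum of orbit sizes, using that $|\mathcal{G(C)}|=\FPdim(\mathcal{C}_\mathrm{pt})=p^k$ is a prime power. To set the stage: since $\mathcal{C}$ is modular, $\mathcal{U(C)}\cong\mathcal{G(C)}$, and I write the universal grading as $\mathcal{C}=\bigoplus_{g\in\mathcal{G(C)}}\mathcal{C}_g$ with $\mathcal{C}_e=\mathcal{C}_{\mathrm{ad}}$. The hypothesis $\mathcal{C}_\mathrm{pt}\subseteq\mathcal{C}_{\mathrm{ad}}$ puts every invertible object in $\mathcal{C}_e$, so tensoring on the left by $h\in\mathcal{G(C)}$ maps $\mathcal{O}(\mathcal{C}_g)$ (the set of isomorphism classes of simple objects of $\mathcal{C}_g$) to itself; the resulting action of $\mathcal{G(C)}$ on the finite set $\mathcal{O}(\mathcal{C}_g)$ has point stabilizers exactly the subgroups $G[X]$. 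I will also use that $\mathcal{C}_\mathrm{pt}\subseteq\mathcal{C}_{\mathrm{ad}}=\mathcal{C}_\mathrm{pt}'$ makes $\mathcal{C}_\mathrm{pt}=(\mathcal{C}_{\mathrm{ad}})_\mathrm{pt}$ symmetric and, being of odd dimension $p^k$, isotropic: $\theta_h=1$ for every $h\in\mathcal{G(C)}$ \cite[Corollary 2.7]{DGNO1}.

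The heart of the argument is the claim that for $g\neq1$ and every $X\in\mathcal{O}(\mathcal{C}_g)$ the stabilizer $G[X]$ is a \emph{proper} subgroup of $\mathcal{G(C)}$. Suppose instead that $h\otimes X\cong X$ for all $h\in\mathcal{G(C)}$, and fix such an $h$. Since $h\otimes X$ is simple and isomorphic to $X$, the balancing equation \eqref{balancing} collapses to $s_{h,X}=\theta_h^{-1}\theta_X^{-1}\theta_X\di_X=\theta_h^{-1}\di_X=\di_X$, using $\theta_h=1$. On the other hand, $\sigma_{h,X}^2=\xi_h(X)\,\id_{h\otimes X}$ gives $s_{h,X}=\Tr(\sigma_{h,X}^2)=\xi_h(X)\di_X$. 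As $\di_X\neq0$ in a modular tensor category, these force $\xi_h(X)=1$; since this holds for every $h\in\mathcal{G(C)}$, the object $X$ centralizes $\mathcal{C}_\mathrm{pt}$, hence $X\in\mathcal{C}_\mathrm{pt}'=\mathcal{C}_{\mathrm{ad}}=\mathcal{C}_e$, contradicting $X\in\mathcal{C}_g$ with $g\neq1$. This is really a pointwise refinement of the computation behind Lemma~\ref{gX=X} and Corollary~\ref{intersection}.

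Granting the claim, the proposition follows at once: for $g\neq1$ each $\mathcal{G(C)}$-orbit in $\mathcal{O}(\mathcal{C}_g)$ has cardinality $[\mathcal{G(C)}:G[X]]$, which is a positive power of $p$ because $G[X]\subsetneq\mathcal{G(C)}$ and $|\mathcal{G(C)}|=p^k$; summing over the orbits yields $\rank(\mathcal{C}_g)=|\mathcal{O}(\mathcal{C}_g)|\equiv0\pmod p$. I expect the only real obstacle to be the claim, and within it the one point deserving care is the cancellation of $\di_X$, i.e. knowing that quantum dimensions of simple objects are nonzero in a modular tensor category; the remainder is bookkeeping with the universal grading together with facts already recorded in Section~\ref{section: preliminaries}.
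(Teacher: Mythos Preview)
Your proof is correct and follows essentially the same route as the paper: act by $\mathcal{G(C)}$ on $\mathcal{O}(\mathcal{C}_g)$, use that $\mathcal{C}_\mathrm{pt}$ is symmetric and isotropic (odd dimension) so the balancing equation gives $s_{h,X}=\di_X=\di_h\di_X$ for a hypothetical fixed $X$, and conclude $X\in\mathcal{C}_\mathrm{pt}'=\mathcal{C}_{\mathrm{ad}}$, a contradiction; then finish by $p$-group orbit counting. The only cosmetic difference is that the paper invokes M\"uger's criterion ($s_{h,X}=\di_h\di_X\Rightarrow X$ centralizes $h$) directly, whereas you reprove it via $\xi_h(X)=1$, and the paper phrases the counting as ``number of fixed points $\equiv|\mathcal{O}(\mathcal{C}_g)|\pmod p$'' rather than summing orbit sizes.
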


\begin{proof}
Let $g\in \mathcal{G(C)}$ such that $g\ne 1$. Since $\mathcal{G(C)}$ acts on $\mathcal O(\mathcal C_g)$ by left multiplication and $\mathcal{G(C)}$ is a $p$-group, we have that the number of fixed objects by the action must be congruent to $|\mathcal O(\mathcal C_g)|$ modulo $p$. We show that there can be no fixed objects, and so the statement follows. 

Suppose there exists an element $X\in \mathcal O(\mathcal C_g)$ that is fixed by the action. Since $\mathcal{C}_\mathrm{pt}\subseteq \mathcal C_{\mathrm{ad}}$ and $\mathcal{C}_\mathrm{pt}'=\mathcal C_{\mathrm{ad}}$ we have that $\mathcal{C}_\mathrm{pt}$ is symmetric and odd-dimensional, and thus by \cite[Corollary 2.7]{DGNO1} we get $\theta_h=1$ for all $h\in \mathcal{C}_\mathrm{pt}.$ Hence by the balancing equation
\begin{align*}
    s_{h,X}=\theta_h^{-1}\theta_X^{-1}\theta_X \di_X=\di_X=\di_h\di_X, \text{ for all } h\in \mathcal{C}_\mathrm{pt}.
\end{align*}

Therefore $X\in \mathcal{C}_\mathrm{pt}'=\mathcal C_{\mathrm{ad}}$  \cite[Proposition 2.5]{Mu}, a contradiction.

\end{proof}

\begin{pro}\label{rank a}
Let $p$ be an odd prime and $a\in \mathbb C$. If $\mathcal C$ is a non-pointed MTC such that $\mathcal{C}_\mathrm{pt}\subseteq \mathcal C_{\mathrm{ad}}$ and $\FPdim(\mathcal{C}_\mathrm{pt})=p^k$ for some $k\in \mathbb N$, 
then  $\big| \{X\in \mathcal O(\mathcal C_g) \ |\  \dim(X)=a\}\big|$ is divisible by $p$ for all $g\in \mathcal{G(C)}$ such that $g\ne 1$.
\end{pro}

\begin{proof}
Let $g\in \mathcal{G(C)}$ such that $g\ne 1$ and let $a\in \mathbb C$. If  $\{X\in \mathcal O(\mathcal C_g) \ |\  \dim(X)=a\}$ is empty the statement is clear. Assume the set is not empty. Since $\mathcal{G(C)}$ acts on it by left multiplication and $\mathcal{G(C)}$ is a $p$-group, we have that the number of fixed objects by the action must be congruent to the cardinal of the set modulo $p$.  By the same argument given in the proof of Proposition \ref{rank Cg} there can be no fixed objects, and so the statement follows. 
\end{proof}

\section{Application: low rank odd-dimensional MTCs}\label{section: mnsd low rank}
This section is devoted to advancing the classification of odd-dimensional MTCs of low rank, giving a proof to our main Theorem \ref{thm:MNSD}.

Recall that odd-dimensional MTCs are MNSD \cite{NS}, and so they must have odd rank. Hence we restrict ourselves to odd-dimensional MTCs $\mathcal C$ such that $\rank(\mathcal{C})\in\{13, 15, 17, 19, 21, 23 \}$, since Bruillard and Rowell showed that all odd-dimensional MTCs of rank up to 11 are pointed, and found a non-pointed example of rank 25 \cite{BR}.

Note that proving Theorem \ref{thm:MNSD} is equivalent to showing that
\begin{enumerate}[leftmargin=*, label=(\alph*)]
\item if $\rank(\mathcal C)=13$ or $15$, then $|\mathcal{G(C)}|=\rank(\mathcal{C})$, 
\item if $\rank(\mathcal C)=17, 19, 21$ or $23$, then $|\mathcal{G(C)}|=\rank(\mathcal{C})$ or $1$.
\end{enumerate}

We will prove our statement discarding the different possibilities for $|\mathcal{G(C)}|$ until we are left with the cases stated above.

We start by proving the following useful Lemma. 
\begin{lemma}\label{Cadpt no trivial}
Let $\mathcal C$ be a non-pointed MNSD MTC such that $\rank(\mathcal{C})\in\{13, 15, 17, 19, 21, 23 \}$. Then $(\mathcal{C}_{\mathrm{ad}})_{\mathrm{pt}}$ is trivial if and only if $\mathcal{C}_\mathrm{pt}$ is trivial.
\end{lemma}
\begin{proof}
It follows from Corollary \ref{primerank} that $(\mathcal{C}_{\mathrm{ad}})_{\mathrm{pt}}$ trivial implies $\mathcal{C}_\mathrm{pt}$ trivial for ranks $13, 17, 19$ or $23$. 

Let $\rank(\mathcal{C})=15$ and assume $(\mathcal C_{\mathrm{ad}})_{\mathrm{pt}}$ is trivial. If $\mathcal{C}_\mathrm{pt}$ is not trivial, then by Corollary \ref{primerank} we have that $\mathcal{C}_{\mathrm{ad}}$ is a MNSD MTC of rank $3$ or $5$, and thus it is pointed  \cite{HR, RSW, BR}, which is a contradiction. 

Similarly, if $\rank(\mathcal{C})=21$ and we assume that $(\mathcal{C}_{\mathrm{ad}})_{\mathrm{pt}}$ is trivial but $\mathcal{C}_\mathrm{pt}$ is not trivial, then $\mathcal{C}_{\mathrm{ad}}$ is a MNSD modular of rank $3$ or $7$  \cite{HR, RSW, BR}, which is a contradiction. 
\end{proof}

\begin{remark}\label{casos descartados} Note that $|\mathcal{G(C)}|$ must be an odd integer smaller or equal to $\rank(\mathcal{C})$. By Lemma \ref{rank1} part \ref{item:rank-odd-dim} we must have that $\rank(\mathcal{C}) \geq \rank(\mathcal{C}_{\mathrm{ad}}) + |\mathcal{G}(\mathcal{C})| + 2p - 3 $ for all odd prime $p$ that divides $|\mathcal{G}(\mathcal{C}_{\mathrm{ad}})|$. From this and Lemma \ref{Cadpt no trivial} we conclude that the following are all the possible options for $|\mathcal{G(C)}|$:

\begin{enumerate}
    \item If $\rank(\mathcal{C})=13$, then $|\mathcal{G(C)}|= 3$ or $1$.
    \item If $\rank(\mathcal{C})=15, 17, 19, 21$ or $23$, then $|\mathcal{G(C)}|= 9, 5, 3$ or $1$.
\end{enumerate}
\end{remark}

We proceed discarding, case by case, all the possibilities stated above for $\rank(\mathcal{C})=13, 15$ and all possibilities stated above, besides $|\mathcal{G(C)}| = 1$, for $\rank(\mathcal{C})=17, 19, 21, 23$. 


\begin{theorem}\label{thm:MNSD}
Let $\mathcal C$ be an odd-dimensional MTC.
\begin{enumerate}[leftmargin=*, label=(\alph*)]
\item\label{item:MNSD-rank13,15} If $\rank(\mathcal C)=13$ or $15$, then $\mathcal C$ is pointed.
\item\label{item:MNSD-rank<25} If $\rank(\mathcal C)=17$, then $\mathcal C$ is either pointed, perfect, or has 3 invertible objects. Moreover, if the latter case exists, then $\mathcal C$ is equivalent to a categorification of the ring $R_{3,H}$  as defined in [JL, Definition 1.3], where $H$ is a finite abelian group of order 25 .
\item\label{item:MNSD-rank<25} If $\rank(\mathcal C)=17, 19, 21$ or $23$, then $\mathcal C$ is either pointed or perfect.
\end{enumerate}
\end{theorem} 

\begin{proof} Our proof will be given considering each rank as a separate case. The techniques used in each of the cases are similar, and we include the details for each of them for completeness.

\vspace{0.25cm}
\ref{item:MNSD-rank13,15} \textbf{Case rank($\mathcal C\textbf )=13$:} by Remark \eqref{casos descartados}, it is enough to discard the possibilities $|\mathcal{G(C)}|=3$ and $1$. Recall that by Lemma \ref{Cadpt no trivial}  $(\mathcal{C}_{\mathrm{ad}})_{\mathrm{pt}}$ is not trivial if $|\mathcal {G(C)}|\ne 1$. 

Assume $|\mathcal{G(C)}|= 3$. Note that $\mathcal{C}_\mathrm{pt} \subseteq \mathcal{C}_{\mathrm{ad}}$ and $\FPdim(\mathcal{C}_{\mathrm{ad}})$ cannot be equal to 3. Hence, there must exist a simple non-invertible element in $\mathcal{C}_{\mathrm{ad}}$, and as $\mathcal{C}$ is MNSD the rank of $\mathcal{C}_{\mathrm{ad}}$ is at least five. Moreover, by Lemma $\ref{rank1}$ part \ref{item:rank-same-dimension} there exists $g \in \mathcal{G(C)}\simeq \mathbb{Z}_3$ such that $ 3 \leq \rank(\mathcal{C}_g) = \rank(\mathcal{C}_{g^{-1}})$. Thus, $\mathcal{C}_{\mathrm{ad}}$ has rank either 5 or 7, and both cases are discarded by Remark \ref{cong 8}.
 
Assume now that $|\mathcal{G(C)}|=1$.
We will denote the non-invertible simple objects in $\mathcal{C}$ by $X_1, X_1^*, \cdots, X_6, X_6^*$, and their respective Frobenius-Perron dimensions by $\di_1, \cdots, \di_6$. Up to relabeling the simple objects, we have 
that $\di_1 \geq \di_2 \geq \cdots \geq \di_6$. Hence,
\begin{align}\label{h1}
    \dim(\mathcal{C})= 1 + 2 \di_1^2 + \cdots + 2 \di_6^2 \leq 1 + 12 \di_1^2.
\end{align}

On the other hand, by \cite[Theorem 2.11]{ENO2} there exists an odd integer $l$ such that $\dim(\mathcal{C}) = l \di_1^2$. Equation \eqref{h1} implies that $l\leq 12$, and therefore $l=5$ (see Lemma \ref{dim cong rank}).  Consequently, 

\begin{align}\label{h2}
    3\di_1^2= 1 + 2 \di_2^2 + \cdots + 2 \di_6^2 \leq 1 + 10 \di_2^2.
\end{align}

Again, by \cite[Theorem 2.11]{ENO2}, we know that $\di_2^2$ divides $\dim(\mathcal{C})= 5\di_1^2,$ and so there exists an odd integer $m$ such that $ \di_1^2 = m^2 \di_2^2$. Equation \eqref{h2} implies that $m=1$, that is, $\di_1= \di_2$ and  

\begin{align}\label{h3}
    \di_2^2= 1 + 2 \di_3^2 + \cdots + 2 \di_6^2 \leq 1 + 8 \di_3^2.
\end{align}

By the same argument as before, there exists an odd integer $n$  such that $\di_2^2= n^2 \di_3^2$, and Equation \eqref{h3} implies that $n=1$. Hence,

\begin{align*}
    \di_3^2= 1 + 2 \di_3^2 + \cdots + 2 \di_6^2 ,
\end{align*}

which is a contradiction.

\vspace{0.5 cm}
 \textbf{Case rank($\mathcal C\textbf )=15$:} by Remark \eqref{casos descartados} it is enough to discard the possibilities $|\mathcal{G(C)}|= 9, 5, 3$ and $1$. Recall that, by Lemma \ref{Cadpt no trivial}, $(\mathcal{C}_{\mathrm{ad}})_{\mathrm{pt}}$ is not trivial if $|\mathcal {G(C)}|\ne 1$.

Case $|\mathcal{G(C)}|= 9$: since $(\mathcal{C}_{\mathrm{ad}})_{\mathrm{pt}}$ is not trivial its Frobenius-Perron dimension must be at least 3. As $\FPdim(\mathcal{C}_{\mathrm{ad}})$ cannot be equal to 3, the rank of $\mathcal{C}_{\mathrm{ad}}$ is at least five. Thus, this case is discarded by Lemma \ref{rank1} part \ref{item:rank-odd-dim}, taking $p=3.$

Case $|\mathcal{G(C)}|= 5$: here $\mathcal{C}_\mathrm{pt} \subset \mathcal{C}_{\mathrm{ad}}$. As $\FPdim(\mathcal{C}_{\mathrm{ad}})$ cannot be equal to five, the rank of $\mathcal{C}_{\mathrm{ad}}$ is at least seven. Thus, this case is discarded by Lemma \ref{rank1} part \ref{item:rank-odd-dim}, taking $p=5.$

Case $|\mathcal{G(C)}|= 3$: again, $\mathcal{C}_\mathrm{pt} \subset \mathcal{C}_{\mathrm{ad}}$ and the rank of $\mathcal{C}_{\mathrm{ad}}$ must be at least five. Moreover, by Lemma $\ref{rank1}$ part \ref{item:rank-same-dimension} there exists $g \in \mathcal{G(C)}$ such that $ 3 \leq \rank({\mathcal C}_g) = \rank(\mathcal{C}_{g^{-1}})$. Thus, $\mathcal{C}_{\mathrm{ad}}$ has rank either $5, 7$ or $9$, and the last two are discarded by Remark \ref{cong 8}. Consider now the case where $\rank(\mathcal C_{\mathrm{ad}})=5$. By Remark \ref{cong 8}, we get that $\rank(\mathcal C_g)=5$ for all $g\ne 1$, which is a contradiction by Proposition \ref{rank Cg}. 
 
Lastly, assume $|\mathcal {G(C)}|=1$.
We will denote the simple non-invertible objects in $\mathcal{C}$ by $X_1, X_1^*, \cdots, X_7, X_7^*$ and their respective Frobenius-Perron dimensions by $\di_1, \cdots, \di_7$. Relabel the simple objects so that $\di_1 \geq \di_2 \geq \cdots \geq \di_7$. Hence,
\begin{align}\label{h4}
    \dim(\mathcal{C})= 1 + 2 \di_1^2 + \cdots + 2 \di_7^2 \leq 1 + 14 \di_1^2.
\end{align}

On the other hand, by \cite[Theorem 2.11]{ENO2}, there exists an odd integer $l$ such that $\dim(\mathcal{C}) = l \di_1^2$. Equation \eqref{h4} implies that $l\leq 14$, and therefore $l=7$ (see Lemma \ref{dim cong rank}).  Consequently, 

\begin{align}\label{h5}
    5\di_1^2= 1 + 2 \di_2^2 + \cdots + 2 \di_6^2 + 2 \di_7^2 \leq 1 + 12 \di_2^2.
\end{align}

Again, by \cite[Theorem 2.11]{ENO2}, we know that $\di_2^2$ divides $\dim(\mathcal{C})= 7\di_1^2$, and so there exists an odd integer $m$ such that $ \di_1^2 = m^2 \di_2^2$. Equation \ref{h5} implies that $m=1$, that is, $\di_1= \di_2$ and  

\begin{align}\label{h6}
    3 \di_2^2= 1 + 2 \di_3^2 + \cdots + 2 \di_6^2 + 2 \di_7^2\leq 1 + 10 \di_3^2.
\end{align}

By the same argument as before, there exists an odd integer $n$ 
such that $\di_2^2= n^2 \di_3^2$, and equation \eqref{h6} implies that $n=1$. Hence,

\begin{align}\label{h7}
    \di_3^2= 1 + 2 \di_4^2 + \cdots + 2 \di_6^2 + 2 \di_7^2\leq 1 + 8 \di_4^2,
\end{align}

Once again, there exists an odd integer $q$
such that $\di_3^2 = q^2 \di_4^2$, and equation \eqref{h7} implies $q=1$. Therefore, 

\begin{align*}
    \di_4^2= 1 + 2 \di_4^2 + \cdots + 2 \di_6^2 + 2 \di_7^2
\end{align*}
which is a contradiction.

\vspace{0.5 cm}
\ref{item:MNSD-rank<25} \textbf{Case rank($\mathcal C\textbf )=17$:}
by Remark \eqref{casos descartados}, it is enough to discard the cases  $|\mathcal{G(C)}|= 9,5$ and $3$. Recall that, by Lemma \ref{Cadpt no trivial},  $(\mathcal{C}_{\mathrm{ad}})_{\mathrm{pt}}$ is not trivial if $|\mathcal {G(C)}|\ne 1$. 
 
Case $|\mathcal{G(C)}|= 9$: Since $(\mathcal{C}_{\mathrm{ad}})_{\mathrm{pt}}$ is not trivial its Frobenius-Perron dimension must be at least 3. As $\FPdim(\mathcal{C}_{\mathrm{ad}})$ cannot be equal to 3, the rank of $\mathcal{C}_{\mathrm{ad}}$ is at least five. Thus, this case is discarded by Lemma \ref{rank1} part \ref{item:rank-odd-dim} taking $p=3$.

Case $|\mathcal{G(C)}|= 5$: here $\mathcal{C}_\mathrm{pt} \subset \mathcal{C}_{\mathrm{ad}}$. As $\FPdim(\mathcal{C}_{\mathrm{ad}})$ cannot be equal to five, the rank of $\mathcal{C}_{\mathrm{ad}}$ is at least seven. Thus, this case is discarded by Lemma \ref{rank1} part \ref{item:rank-odd-dim} taking $p=5$.

Case $|\mathcal{G(C)}|= 3$: as $\mathcal{C}_\mathrm{pt} \subset \mathcal{C}_{\mathrm{ad}}$, the rank of $\mathcal{C}_{\mathrm{ad}}$ is at least five. Moreover, by Lemma \ref{rank1} part \ref{item:rank-same-dimension} there exists $g \in \mathcal{G(C)}\simeq \mathbb{Z}_3$ such that $ 3 \leq \rank({\mathcal C}_g) = \rank(\mathcal{C}_{g^{-1}})$. Thus, $\mathcal{C}_{\mathrm{ad}}$ has rank either 5, 7 , 9 or 11. The first three  cases are discarded by Remark \ref{cong 8}. Assume $\rank(\mathcal{C}_{\mathrm{ad}})=11$.
We denote the non-invertible objects in $\mathcal{C}_{\mathrm{ad}}$ by $X_1, X_1^*, \cdots, X_4, X_4^*, $ and the invertible ones by $\textbf{1}, g, g^2$. Note that since $|\mathcal{G(C)}| = 3$, the action of $\mathcal{G(C)}$ by left multiplication on $\{X_1, X_1^*, \cdots, X_4, X_4^*\}$ has $2$ or $8$ fixed objects. 

Lets consider first the case in which there are exactly $2$ fixed objects by the action. That is, we have that (up to relabeling) the simple objects $X_1$ and $X_1^*$ are the only simple objects fixed by the action. Denote by $\di_i$ the Frobenius-Perron dimensions of the objects $X_i$ and $X_i^*$ for all $i$. It is easy to see that since $X_2, X_2^*, X_3, X_3^*, X_4, X_4^*$ are not fixed by the action, we have that $\di:=\di_{2}= \di_{3} = \di_{4}$. Thus, 
\begin{equation*}
    \dim(\mathcal{C})= \dim(\mathcal{C}_\mathrm{pt}) \dim(\mathcal{C}_{\mathrm{ad}}) = 9 + 6 \di_{1}^2 + 18 \di^2.
\end{equation*}

Hence, $\gcd(\di_{1}, \di)=1,3$. Assume $\gcd(\di_{1}, \di)=3$, and consider the decomposition 
$$X_2 \otimes X_2^* = \textbf{1} \oplus N_{X_2 X_2^*}^{X_1} X_1 \oplus \cdots \oplus N_{X_2X_2^*}^{X_4^*} X_4^*.$$
Taking dimensions on both sides, we get $$ \di^2= 1 +  \di_{1} (N_{X_2 X_2^*}^{X_1} + N_{X_2 X_2^*}^{X_1^*}) + \di (N_{X_2 X_2^*}^{X_2}+ \cdots+ N_{X_2 X_2^*}^{X_4^*}),$$
and thus 3 divides 1, which is a contradiction. Consequently, $\gcd(\di_{1}, \di) =1$. Let $ Y \in \{X_2, X_2^*, \cdots, X_4^*\}$. Consider the decomposition $$X_1 \otimes Y= N_{X_1 Y}^{X_1} X_1 \oplus \cdots \oplus N_{X_1 Y}^{X_4^*} X_4^*. $$
Notice that neither $g$ nor $g^2$ are subobjects of $X_1\otimes Y$ since $g$ fixes $X_1$ and $Y \not\simeq X_1^*$.
Taking dimensions on both sides on the previous equation we get
$$\di_{1}\di = \di_{1} (N_{X_1 Y}^{X_1} + N_{X_1 Y}^{X_1^*}) + \di (N_{X_1 Y}^{X_2}+ \cdots N_{X_1 Y}^{X_4^*}).$$

Thus, $\di_{1} $ divides $N_{X_1 Y}^{X_2}+ \cdots +N_{X_1 Y}^{X_4^*}$ and $\di$ divides $N_{X_1 Y}^{X_1} + N_{X_1 Y}^{X_1^*} $. Since  $\gcd(\di_1, \di)=1$, either $\di_{1} = N_{X_1 Y}^{X_2}+ \cdots + N_{X_1 Y}^{X_4^*}$ and $N_{X_1 Y}^{X_1} + N_{X_1 Y}^{X_1^*}=0$ or $\di = N_{X_1 Y}^{X_1} + N_{X_1 Y}^{X_1^*}$ and $N_{X_1 Y}^{X_2}+ \cdots + N_{X_1 Y}^{X_4^*}=0$. Assume the latter is true for some $Y\in \{X_2, \dots, X_4^*\}$. Then $N_{X_1 Y}^{X_2}= \cdots = N_{X_1 Y}^{X_4^*}=0$. In particular, $N_{X_1Y}^Y=0$ and so by the fusion rules we have that  $N_{Y Y^*}^{X_1}=0$. Thus 
$$Y \otimes Y^* = \textbf{1} \oplus N_{Y Y^*}^{X_2} X_2 \oplus\dots \oplus N_{Y Y^*}^{X_4^*} X_4^*,$$
and taking dimensions on both sides, we get
$$\di^2=1 + \di ( N_{Y Y^*}^{X_2}  +\dots + N_{Y Y^*}^{X_4^*}),$$
and so $\di$ divides 1, a contradiction. Therefore  $\di_{1} = N_{X_1 Y}^{X_2}+ \cdots + N_{X_1 Y}^{X_4^*}$ and $N_{X_1 Y}^{X_1} + N_{X_1 Y}^{X_1^*}=0$ for all $Y\in \{X_2, \dots, X_4^*\}$. Consequently, $N_{X_1 Y}^{X_1} = N_{X_1 Y}^{X_1^*}=0$ for all $Y\in \{X_2, \dots, X_4^*\}$, and so by the fusion rules we get that $N_{X_1 X_1^*}^{Y} = N_{X_1 X_1^*}^{Y^*}=0$ for all $Y\in \{X_2, \dots, X_4^*\}$.



Thus, 
$$X_1 \otimes X_1^* = \textbf{1} \oplus g \oplus g^2 \oplus N_{X_1 X_1^*}^{X_1} X_1 \oplus N_{X_1 X_1^*}^{X_1^*} X_1^*,$$
which implies that $\di_{1}=3$. Hence, $\dim(\mathcal{C})= 9 + 3^26 + 18  \di^2. $
By \cite[Theorem 2.11]{ENO2}, we get that $\di^2$ divides $3^27$. Thus, $\di^2= 9$, which is a contradiction as $\gcd(\di_{1}, \di)=1$ and $\di_1 = 3$.

Lastly, we consider the case in which all simple non-invertible objects in $\mathcal{C}_{\mathrm{ad}}$ are fixed by $\mathbb{Z}_3$. Recall that $\mathcal{C}= \mathcal{C}_{\mathrm{ad}} \oplus \mathcal{C}_g \oplus \mathcal{C}_{g^2}$, where $\rank(\mathcal{C}_{g})=\rank(\mathcal{C}_{g^2}) =3$. We denote the simple objects of $\mathcal{C}_{g}$ by $Y_1, Y_2, Y_3$ and their respective Frobenius-Perron dimensions by $\di_{Y_1}, \di_{Y_2}, \di_{Y_3}$. Note that the simple objects of $\mathcal{C}_{g}$ are exactly $Y_1^*, Y_2^*, Y_3^*$, and thus by Corollary \ref{G} the action of $\mathcal{G(C)}\simeq \mathbb{Z}_3$ by left multiplication on $\{Y_1, Y_2, Y_3\}$ must be non-trivial. We may relabel the simples so that $g \otimes Y_1 = Y_2$ and $g^2\otimes Y_1 = Y_3$. So $\di_{Y_1}=\di_{Y_2}=\di_{Y_3}=:\di$. Now, for all $i=1, \cdots, 4$, we have that $X_i\otimes Y_1 \in \mathcal{C}_{g}$, so 
\begin{align}\label{e1}
    X_i \otimes Y_1 = N_{X_i Y_1}^{Y_1} Y_1 \oplus N_{X_i Y_1}^{Y_2} Y_2 \oplus N_{X_i Y_1}^{Y_3} Y_3.
\end{align}
On the other hand, 
\begin{align}
    &X_i \otimes Y_1 = g \otimes X_i \otimes Y_1= N_{X_i Y_1}^{Y_1} Y_2 \oplus N_{X_i Y_1}^{Y_2} Y_3 \oplus N_{X_i Y_1}^{Y_3} Y_1,\label{e2}\\ 
     &X_i \otimes Y_1 = g^2 \otimes X_i \otimes Y_1= N_{X_i Y_1}^{Y_1} Y_3 \oplus N_{X_i Y_1}^{Y_2} Y_1 \oplus N_{X_i Y_1}^{Y_3} Y_2.\label{e3}
\end{align}

From equations \eqref{e1}, \eqref{e2}, \eqref{e3} we get that $N_{X_i Y_1}^{Y_1} =N_{X_i Y_1}^{Y_2}= N_{X_i Y_1}^{Y_3}  $, hence

\begin{align*}
    X_i \otimes Y_1 = N_{X_i Y_1}^{Y_1} (Y_1 \oplus Y_2 \oplus Y_3).
\end{align*}

Consequently, $\di_{X_i} = 3 N_{X_i Y_1}^{Y_1} $. So, 3 divides $\di_{X_i}$ for all $i=1,\cdots, 4$. Let $c_{X_i}= \di_{X_i}/3.$ 

Note that $ \dim(\mathcal{C})= 3 \dim(\mathcal{C}_{\mathrm{ad}})= 3 \dim(\mathcal{C}_g) = 9 \di^2.$  As $\di_{X_i}^2 $ divides $\dim(\mathcal{C}),$ we get $c_{X_i}^2$ divides $\di^2.$ Reordering the indices so that $c_{X_1} \geq c_{X_2} \geq c_{X_3} \geq c_{X_4},$ and letting $l$ be an odd integer such that $\di^2= l^2 c_{X_1}^2$, we get that  
\begin{align} \label{e4}
3 + 2 \di_{X_1}^2 + \cdots + 2 \di_{X_4}^2= \dim(\mathcal{C}_{\mathrm{ad}})= \dim(\mathcal{C}_g) = 3 \di^2.    
\end{align}
 Dividing each side of equation \eqref{e4} by 3, we get 
$$ l^2 c_{X_1}^2 =\di^2= 1+ 6 c_{X_1}^2 + \cdots + 6 c_{X_4}^2 \leq 1 + 24 c_{X_1}^2. $$

Hence, $l^2 \leq 25$, and so $l^2=1, 9$ or $25$. If $l^2=9,$ then 9 divides $\di^2$, and as $9$ also divides $\di_{X_1}^2, \cdots, \di_{X_4}^2$, by equation $\eqref{e4}$ we have that $9$ divides 3. Consequently, $l^2=1$, i.e, $\di^2 = c_{X_1}^2, $ and $\di_{X_1}^2 = 9 \di^2= \dim(\mathcal{C})= 3 \dim(\mathcal{C}_{\mathrm{ad}})= 9 + 6 \di_{X_1}^2 + \cdots+ 6 \di_{X_4}^2$, which is again a contradiction. 

Lastly, suppose that $l^2=25.$ Then $c_{X_1}=1$ and so $d_{X_i}=3$ for $1\leq i \leq 4.$ Then $d^2=25$ and so $d=5$. That is, $\mathcal C$ has 3 invertible objects and 6 objects of dimension 3 in $\mathcal{C}_{\mathrm{ad}}$, and 3 objects of dimension 5 in $\mathcal C_g$ for $g\ne e.$ Consider the action of $\mathbb Z_3\simeq \mathcal G(\mathcal C)$ in $\mathcal C$. Since all non-invertible simple objects in $\mathcal{C}_{\mathrm{ad}}$ are fixed by the action of $\mathbb Z_3$, the de-equivariantization $\mathcal C_{\mathbb Z_3}$ has 25 invertible objects. On the other hand, each non-trivial component $\mathcal C_g$ contains 3 simple objects that are permuted by the action of $\mathbb Z_3$ and thus induces an object of dimension 5 in $\mathcal C_{\mathbb Z_3}$. Hence $\mathcal C_{\mathbb Z_3}$ is a $\mathbb Z_3$-graded category of rank 27, with 25 invertible objects in its trivial component. Let $(\mathcal C_{\mathbb Z_5})_{\mathrm{pt}}\cong \operatorname{Vec}_H^{\omega}$ for some abelian group $H$ of order 25 and 3-cocycle $\omega$. Then $\mathcal K(\mathcal C_{\mathbb Z_3})\simeq R_{3,H}$, where  $\mathcal K(\mathcal C_{\mathbb Z_3})$  denotes the Grothendieck ring of $\mathcal C_{\mathbb Z_3}$ and $R_{3,H}$ is the fusion ring  defined on~\cite[Definition 1.3]{JL}.

\vspace{0.5 cm}
 \textbf{Case rank($\mathcal C\textbf )=19$:} by Remark \ref{casos descartados}, it is enough to discard the cases  $|\mathcal{G(C)}|= 9,5,$ and $3$. Recall that, by Lemma \ref{Cadpt no trivial}, $(\mathcal{C}_{\mathrm{ad}})_{\mathrm{pt}}$ is not trivial if $|\mathcal {G(C)}|\ne 1$. 

Case $|\mathcal{G(C)}|= 5$: here, $\mathcal{C}_\mathrm{pt} \subset \mathcal{C}_{\mathrm{ad}}$. As $\FPdim(\mathcal{C}_{\mathrm{ad}})$ cannot be equal to five, the rank of $\mathcal{C}_{\mathrm{ad}}$ is at least seven. Thus, this case is discarded by Remark \ref{cong 8}.
  
Case $|\mathcal{G(C)}|= 3$ or $9$: as $\FPdim(\mathcal{C}_{\mathrm{ad}})$ cannot be equal to 3, the rank of $\mathcal{C}_{\mathrm{ad}}$ is at least 5. By Lemma \ref{rank1} part \ref{item:rank-same-dimension} there exists $g \in \mathcal{G(C)}$ such that $ 3 \leq \rank(\mathcal{C}_g) = \rank(\mathcal{C}_{g^{-1}})$. Hence, $\rank(\mathcal{C}_{\mathrm{ad}})= 5, 7, 9, 11 $ or $13$, and all cases are discarded by Remark \ref{cong 8}.

\vspace{0.5 cm}
 \textbf{Case rank($\mathcal C\textbf )=21$:} by Remark \ref{casos descartados}, it is enough to discard the cases  $|\mathcal{G(C)}|= 15, 9, 5,$ and $3$. Recall that $(\mathcal{C}_{\mathrm{ad}})_{\mathrm{pt}}$ is not trivial if $|\mathcal {G(C)}|\ne 1$ by Lemma \ref{Cadpt no trivial}.  
 

 Case $|\mathcal{G(C)}|= 5$: here, $\mathcal{C}_\mathrm{pt}\subseteq \mathcal C_{\mathrm{ad}}$, and since $\FPdim(\mathcal{C}_{\mathrm{ad}})$ cannot be equal to 5 we get that $\rank(\mathcal{C}_{\mathrm{ad}})\geq 7$. By Lemma \ref{rank1} part \ref{item:rank-same-dimension} there exists $g \in \mathcal{G(C)}$ such that $ 5 \leq \rank(\mathcal{C}_g) = \rank(\mathcal{C}_{g^{-1}})$. Therefore $\rank(\mathcal{C}_{\mathrm{ad}})= 7$ or $9$, and both cases are discarded by Remark \ref{cong 8}.

Case $|\mathcal{G(C)}|= 9$: since $\FPdim(\mathcal{C}_{\mathrm{ad}})$ cannot be equal to 3 we get that $\rank(\mathcal{C}_{\mathrm{ad}})\geq 5$. By Lemma \ref{rank1} part \ref{item:rank-same-dimension} there exists $g \in \mathcal{G(C)}$ such that $ 3 \leq \rank(\mathcal{C}_g) = \rank(\mathcal{C}_{g^{-1}})$. Therefore $\rank(\mathcal{C}_{\mathrm{ad}})= 5, 7$ or $9$, and all cases are discarded by Remark \ref{cong 8}.

 Case $|\mathcal{G(C)}|= 3$: as $\FPdim(\mathcal{C}_{\mathrm{ad}})$ cannot be equal to 3 we have that $\rank(\mathcal{C}_{\mathrm{ad}})\geq 5$. By Lemma \ref{rank1} part \ref{item:rank-same-dimension} there exists $g \in \mathcal{G(C)}$ such that $ 3 \leq \rank(\mathcal{C}_g) = \rank(\mathcal{C}_{g^{-1}})$. Hence, $\rank(\mathcal{C}_{\mathrm{ad}})= 5, 7, 9, 11, 13$ or $15$, and all cases but $\rank(\mathcal{C}_{\mathrm{ad}})=  7$ are discarded by Remark \ref{cong 8}. If $\rank(\mathcal{C}_{\mathrm{ad}})=  7$ then $\rank(\mathcal C_g)=7$ for all $g\in \mathcal {G(C)}$, which is a contradiction by Proposition \ref{rank Cg}.

 \vspace{0.5 cm}
 \textbf{Case rank($\mathcal C\textbf )=23$:} by Remark \ref{casos descartados}, it is enough to discard the cases  $|\mathcal{G(C)}|= 15, 9, 5$, and $3$. Recall that by Lemma \ref{Cadpt no trivial}  $(\mathcal{C}_{\mathrm{ad}})_{\mathrm{pt}}$ is not trivial if $|\mathcal {G(C)}|\ne 1$.


Case $|\mathcal{G(C)}|= 9$:
since $\FPdim(\mathcal{C}_{\mathrm{ad}})$ cannot be equal to 3 we have that $\rank(\mathcal{C}_{\mathrm{ad}})\geq 5.$ By Lemma \ref{rank1} part \ref{item:rank-same-dimension} there exists $g \in \mathcal{G(C)}$ such that $ 3 \leq \rank(\mathcal{C}_g) = \rank(\mathcal{C}_{g^{-1}})$. Hence, $\rank(\mathcal{C}_{\mathrm{ad}})= 5, 7, 9$ or $11$, and all cases are discarded by Remark \ref{cong 8}. 

Case $|\mathcal{G(C)}|= 5$: since $\FPdim(\mathcal{C}_{\mathrm{ad}})$ cannot be equal to five we get that $\mathcal{C}_{\mathrm{ad}}\geq 7.$. By Lemma \ref{rank1} part \ref{item:rank-same-dimension} there exists $g \in \mathcal{G(C)}$ such that $ 5 \leq \rank(\mathcal{C}_g) = \rank(\mathcal{C}_{g^{-1}})$. Hence, $\rank(\mathcal{C}_{\mathrm{ad}})= 7, 9$ or $11$, and all cases are discarded by Remark \ref{cong 8}.

Case $|\mathcal{G(C)}|= 3$: since $\FPdim(\mathcal{C}_{\mathrm{ad}})$ cannot be equal to three, the rank of $\mathcal{C}_{\mathrm{ad}}$ is at least five. By Lemma \ref{rank1} part \ref{item:rank-same-dimension} there exists $g \in \mathcal{G(C)}$ such that $ 3 \leq \rank(\mathcal{C}_g) = \rank(\mathcal{C}_{g^{-1}})$. Hence, $\rank(\mathcal{C}_{\mathrm{ad}})= 5, 7, 9, 11, 13, 15$, and all cases but $\rank(\mathcal{C}_{\mathrm{ad}})=  13$ are discarded by Remark \ref{cong 8}. Now, if  $\rank(\mathcal{C}_{\mathrm{ad}})=  13$ then $\rank(\mathcal{C}_g)=5$ for $g\in \mathcal {G(C)}$ such that $g\ne 1$, which is a contradiction by Proposition \ref{rank Cg}.
\end{proof}

\bibliographystyle{plain}

\end{document}